\documentclass[12pt]{article}

\usepackage{amsmath}
\usepackage{amssymb}
\usepackage{amsfonts}
\usepackage{float}
\usepackage{epsfig}
\usepackage[usenames,dvipsnames]{xcolor}
\usepackage{graphicx}
\usepackage{caption}
\usepackage{subcaption}
\usepackage{placeins}
\usepackage{hyperref}
\usepackage{algorithm}
\usepackage{algorithmic}
\usepackage{calc}
\usepackage{tikz}
\usetikzlibrary{calc}

\allowdisplaybreaks[4]

\newtheorem{theorem}{Theorem}[section]

\newtheorem{corollary}{Corollary}[section]
\newtheorem{prop}{Proposition}[section]
\newtheorem{definition}{Definition}[section]

\newenvironment{proofof}[1]{\smallskip\noindent{\textbf{Proof~of~#1.}}%
  \hspace{1pt}}{\hspace{-5pt}{\nobreak\quad\nobreak\hfill\nobreak%
    $\square$\vspace{2pt}\par}\smallskip\goodbreak}

\newenvironment{proof}{\smallskip\noindent{\textbf{Proof.}}%
  \hspace{1pt}}{\hspace{-5pt}{\nobreak\quad\nobreak\hfill\nobreak%
    $\square$\vspace{2pt}\par}\smallskip\goodbreak}

\newcommand{\R}{\mathbb R}
\newcommand{\N}{{\mathbb N}}
\newcommand{\Z}{{\mathbb Z}}

\newcommand{\PRS}{{\cal PRS}}
\newcommand{\SPRS}{{\cal SPRS}}

\newcommand{\abs}[1]{\left\vert #1 \right\vert}

\newcommand{\Rr}{\mathcal{RS}}
\newcommand{\Rsol}{\mathcal{RS}}

\newcommand{\rhocr}{\rho_{\mathrm{cr}}}
\newcommand{\fmax}{f^{\max}}
\newcommand{\TV}{\mathrm{TV}}
\newcommand{\PV}{\mathrm{PV}}
\newcommand{\NV}{\mathrm{NV}}
\newcommand{\dx}{\Delta x}
\newcommand{\dt}{\Delta t}
\newcommand{\xj}[2]{x_{j#1\frac{#2}{2}}}

\DeclareMathOperator{\sgn}{sgn}

\begin{document}

\title{Priority-based Riemann solver for traffic flow on networks%
\thanks{This research was supported by the NSF grant CNS $\#$1446715,  by KI-Net "Kinetic description of emerging challenges in multiscale problems of natural sciences" - NSF grant \# 1107444 and by the INRIA associated team
‘Optimal REroute Strategies for Traffic managEment’ (ORESTE).}}
\author{Maria Laura Delle Monache%
\thanks{Department of Mathematical Sciences, Rutgers University - Camden, Camden, NJ, USA (email: \href{mailto:ml.dellemonache@rutgers.edu}{ml.dellemonache@rutgers.edu}).} 
\and Paola Goatin%
 \thanks{Inria Sophia Antipolis - M\'editerran\'ee, France (email: \href{mailto:paola.goatin@inria.fr}{paola.goatin@inria.fr}).} 
 \and Benedetto Piccoli%
 \thanks{Department of Mathematical Sciences and CCIB, Rutgers University - Camden, Camden, NJ, USA (email: \href{mailto:piccoli@camden.rutgers.edu}{piccoli@camden.rutgers.edu}).}
 }
\date{}
\maketitle
\begin{abstract}
In this article we introduce a new Riemann solver for traffic flow on networks. 
The Priority Riemann solver ($\PRS$) provides a solution at junctions by taking into consideration priorities for the incoming roads and maximization of through flux. 
We prove existence of solutions for the solver for junctions with up to two incoming
and two outgoing roads and
show numerically the comparison with previous Riemann solvers. 
Additionally, we introduce a second version of the solver that considers the priorities as softer constraints and illustrate numerically the differences between the two solvers. \\
\textbf{Keywords}: Scalar conservation laws, Traffic flow, Riemann solver\\
\textbf{AMS sybject classifications}: 90B20, 35L65
\end{abstract}

\section{Introduction}
Conservation law on network is now a mature field with an increasing number of contributions
in recent years. The theory for the scalar case is quite developed (see \cite{CGP05,GP09,MR1338371}),
with most results based on the concept of Riemann solver. The latter is the network equivalent
to the classical Riemann solvers for conservation laws on the real line and provide
a solution to Riemann problems at junctions, i.e., Cauchy problems with constant initial data
on each road.\\
This theory was applied to different domains, including vehicular traffic \cite{GPbook},
supply chains \cite{ADR06}, irrigation channels \cite{NHM-SI09} and others. For a complete
account of recent results and references we refer the reader to the survey 
\cite{MR3200227}.

For vehicular traffic, authors considered many different traffic situations to be modeled,
thus proposing a rich set of alternative junction models even for the scalar case, 
see \cite{CP05,CGP05,DP08,DMRSKGB14,GPbook,MR1338371,MP08,SRKDMGBpreprint}.
Here, we first propose a new model which considers priorities among the incoming roads
as the first criterion and maximization of flux as the second.
The main idea is that the road with the highest priority will use the maximal flow
taking into account also outgoing roads constraints. If some room is left for additional flow
then the road with the second highest priority will use the left space and so son.
A precise definition of the new Riemann solver, called \emph{Priority Riemann Solver},
is based on a traffic distribution matrix $A$ (Definition \ref{eq:calA}),
a priority vector $P=(p_1,\ldots,p_n)$ (with $p_i\geq 0$ and $\sum_i p_i=1$) and
requires a recursion method, which is described in Algorithm \ref{alg:PRS}.\\
We also model special situations in which some outgoing roads do not absorb traffic from some
incoming ones and propose an alternative solver with softer priorities, see Algorithm \ref{alg:SPRS}.\\
During the writing of this manuscript we discovered that our priority-based Riemann solver may be
obtained as limit of solvers defined by Dynamic Traffic Assignment based on junctions with queues
\cite{BN16}.

The general existence theorem of \cite{GP09} can be applied to every Riemann solver
satisfying three general properties, called (P1)-(P3), but
can not be applied in the present case. Indeed, the proof is based on
estimates on the flow total variation in space on the network in terms of the total variation
in time of the flow through the junction $\Gamma$, see definition \eqref{eq:gamma}.
In turn the latter is bounded thanks to the general property (P3),
which ensures that waves bringing flux decrease to the junction provoke a decrease of $\Gamma$.
Such property (P3) is not satisfied by the Priority Riemann Solver
(see the Appendix: case A2 with flux increase corresponding to Figure \ref{fig:CaseA2_increasing}).\\
Therefore, we achieve existence via a new set of general properties. Property (P1)
is the same as that of \cite{GP09}, while we modify (P2) and (P3) by using estimates
involving not only $\Gamma$ but also the maximal flow along the priority vector $P$
in the set of admissible flows, see definition \eqref{eq:hbar}.\\
Then we apply the general theory to the Priority Riemann Solver by proving that the new
(P1)-(P3) are satisfied for junctions with at most two incoming and 
two outgoing roads.\\
Then, to illustrate the Priority Riemann Solver, the one with soft priorities and compare
with existing ones, we implement numerical simulations via the Godunov scheme. 

The paper is organized as follows. In Section \ref{sec:basics} we introduce the basic definitions of the theory of conservation laws on networks, then in Section \ref{sec:PRS} we define our Priority Riemann Solver and prove existence of solutions to Cauchy problems in Section \ref{sec:ex_result}. In Section \ref{sec:SPRS}, an alternative definition of the Riemann Solver with softer priorities is described and lastly, in Section \ref{sec:num_sim}, we propose a numerical discretization and show some numerical simulations comparing our Solvers to existing ones. The Appendix \ref{sec:appendix} collects the proof of the main theorem of the paper.


\section{Basics}\label{sec:basics}
In this section we recall the basic definitions and results
of the theory of conservation laws on networks, based
on the concept of Riemann solver at junctions.
Due to finite  propagation speed of waves, to achieve existence results for Cauchy problems
it is not restrictive to focus on a single junction. For details on how to extend
the results to a general network, we refer the reader to \cite{GPbook,GP09} .

Fix a junction $J$ with $n$ incoming roads $I_1,\ldots,I_n$ and $m$ outgoing roads $I_{n+1},\ldots,I_{n+m}$, where
$I_i=]-\infty,0]$ ($i\in\{1,\ldots,n\}$)
and $I_j=[0,+\infty[$ ($j\in\{n+1,\ldots,n+m\}$).
The traffic on each road $I_l$ ($l\in\{1,\ldots,n+m\}$) is modeled
using the celebrated Lighthill-Whitham-Richards model (briefly LWR, see \cite{LW55,R56}):
\begin{equation}
  \label{eq:LWR}
\partial_t  \rho_l+ \partial_xf(\rho_l)=0,\quad t\geq 0, x\in I_l ,
\end{equation}
where $\rho_l(t,x)\in [0,\rho_{\max}]$,
is the car density, $v_l=v_l(\rho_l)$
is the average velocity and $f_l=f_l(\rho_l)=\rho_l v_l(\rho_l)$
is the flux.
For simplicity, throughout the paper we assume $\rho_{\max}=1$ and $f_l=f$ for all $l=1,\ldots, n+m$.

We make the following assumptions on the flux function $f$:
\begin{itemize}
\item[{\bf (H)}]
  $f : [0,1] \rightarrow \R$ is a Lipschitz continuous and
  concave function satisfying
  \begin{enumerate}
  \item $f(0)=f(1)=0$;

  \item there exists a unique $\rhocr\in \ ]0,1[$ such that $f$ is strictly
    increasing in $[0,\rhocr[$ and strictly decreasing in $]\rhocr,1]$.
  \end{enumerate}
\end{itemize}
As usual, entropic solutions and weak solutions at junctions
are given by:
\begin{definition}
  A function $\rho_l\in C([0,+\infty[;L^1_{loc}(I_l))$
  is an entropy-admissible
  solution to~(\ref{eq:LWR}) in the arc $I_l$ if, 
  for every $k\in [0,1]$ and every $\varphi: [0,+\infty[\times I_l\to\R$ 
  smooth, positive and with compact support in
  $]0,+\infty[ \times \left( I_l \setminus \{0\} \right)$, it holds
  \begin{equation} \label{eq:entsol-onearc}
    \int_0^{+\infty}\int_{I_l}\Big( |\rho_l -k| \partial_t \varphi
      + \sgn(\rho_l-k)(f(\rho_l)- f(k))
    \partial_x \varphi \Big)dxdt \geq 0.
  \end{equation}
\end{definition}

\begin{definition}\label{def:weak_solution}
  A collection of functions $\rho_l\in C([0,+\infty[;L^1_{loc}(I_l))$,
  ($l\in\{1,\ldots,n+m\}$) is a weak solution at $J$ if
  \begin{enumerate}
  \item for every $l\in\{1,\ldots,n+m\}$, the function
    $\rho_l$ is an entropy-admissible solution to~(\ref{eq:LWR})
    in the road $I_l$;

  \item for every $l\in\{1,\ldots,n+m\}$ and for a.e. $t>0$, the function
    $x\mapsto\rho_l(t,x)$ has a version with bounded total variation;

  \item for a.e. $t>0$, it holds
    \begin{equation}\label{eq:RH}
      \sum\limits_{i=1}^n f(\rho_i (t, 0-)) = \sum\limits_{j=n
        +1}^{n+m}f(\rho_j (t, 0+))\,,
    \end{equation}
    where $\rho_l$ stands for the version with bounded total variation of 2.
  \end{enumerate}
\end{definition}

A Riemann problem at the junction $J$ is a Cauchy
problem with constant initial data on each road.
More precisely, given $\rho_{1,0},\ldots,\rho_{n+m,0}\in[0,1]$, the corresponding Riemann problem at $J$ is given by
\begin{equation}
  \label{eq:RPatJ}
  \left\{
    \begin{array}{ll}
      \begin{array}{l}
        \partial_t \rho_l+\partial_x f(\rho_l)=0,
        \vspace{.2cm}\\
        \rho_l(0,\cdot)=\rho_{0,l}, 
      \end{array}
      & l\in\{1,\ldots,n+m\}.
    \end{array}
  \right.
\end{equation}

For a collection of functions $\rho_l \in C([0,+\infty[; L^1_{loc} (I_l))$
($l\in \{ 1,\ldots, n+m\}$) such that, for every $l\in \{1,\ldots, n+m\}$
and a.e. $t>0$, the map $x\mapsto \rho_l(t,x)$ has a version with bounded
total variation, we define the functionals
\begin{equation}\label{eq:gamma}
  \Gamma(t):=\sum_{i=1}^{n} f(\rho_{i}(t,0-))
\end{equation}
and
\begin{equation}
  \label{eq:fun2}
  \TV_f(t) := \sum_{l=1}^{n+m} \TV 
  \left(f \left( \rho_{l}(t,\cdot) \right) \right).
\end{equation}
\noindent
Notice that $\Gamma$ is the flux through the junction, i.e. the total number of
cars crossing the junction $J$ per unit of time, while $\TV_f$ is the total variation
of the flux on the whole network.
From the flux bounds we easily derive:
\begin{equation}
  \label{eq:gamma_bound}
  0\le\Gamma(t)\le nf(\rhocr).
\end{equation}

%
%
A Riemann solver at $J$ is defined by:
\begin{definition}\label{def:Riemann_solver}
  A Riemann solver $\mathcal{RS}$ is a function
  \begin{equation*}
    \begin{array}{rccc}
      \mathcal{RS}: & [0,1]^{n+m} & \longrightarrow
      & [0,1]^{n+m}\\
      & (\rho_{1,0},\ldots,\rho_{n+m,0}) & \longmapsto & 
      (\bar\rho_1,\ldots,\bar\rho_{n+m})
    \end{array} 
  \end{equation*}
  satisfying the following properties
  \begin{enumerate}
  \item $\sum\limits_{i=1}^nf(\bar\rho_i)=\sum\limits_{j=n+1}^{n+m}f(\bar\rho_j)$;

  \item for every $i\in\{1,\ldots,n\}$, the classical Riemann problem
    \begin{equation*}
      \left\{
        \begin{array}{l}
          \rho_t+f(\rho)_x=0,\hspace{1cm}x\in\R,\, t>0,\vspace{.2cm}\\
          \rho(0,x)=\left\{
            \begin{array}{ll}
              \rho_{i,0}, & \textrm{ if } x<0,\\
              \bar\rho_i, & \textrm{ if } x>0,
            \end{array}
          \right.
        \end{array}
      \right.
    \end{equation*}
    is solved with waves with negative speed;

  \item for every $j\in\{n+1,\ldots,n+m\}$, the classical Riemann problem
    \begin{equation*}
      \left\{
        \begin{array}{l}
          \rho_t+f(\rho)_x=0,\hspace{1cm}x\in\R,\, t>0,\vspace{.2cm}\\
          \rho(0,x)=\left\{
            \begin{array}{ll}
              \bar\rho_j, & \textrm{ if } x<0,\\
              \rho_{j,0}, & \textrm{ if } x>0,
            \end{array}
          \right.
        \end{array}
      \right.
    \end{equation*}
    is solved with waves with positive speed.
  \end{enumerate}
Moreover, the Riemann solver $\Rsol$ must satisfy the consistency condition if
  \begin{equation*}
    \Rsol(\Rsol(\rho_{1,0},\ldots,\rho_{n+m,0}))=
    \Rsol(\rho_{1,0},\ldots,\rho_{n+m,0})
  \end{equation*}
  for every $(\rho_{1,0},\ldots,\rho_{n+m,0})\in[0,1]^{n+m}$.
\end{definition}

%
%

%
%

For future use, we now provide some definitions
for the LWR model and for Riemann problems at junctions, for more details see \cite{GPbook}.
\begin{definition}\label{def:equilibrium}
  We say that $(\rho_{1,0},\ldots,\rho_{n+m,0})$ is an equilibrium for the
  Riemann solver $\Rsol$ if
  \begin{equation*}
    \Rsol(\rho_{1,0},\ldots,\rho_{n+m,0})=(\rho_{1,0},\ldots,\rho_{n+m,0}).
  \end{equation*}
\end{definition}
\begin{definition}\label{def:good-bad}
  We say that a datum $\rho_i\in[0,1]$ in an incoming road is a good datum if
  $\rho_i\in[\rhocr,1]$ and a bad datum otherwise.

  We say that a datum $\rho_j\in[0,1]$ in an outgoing road is a good datum if
  $\rho_i\in[0,\rhocr]$ and a bad datum otherwise.
\end{definition}
We also define the following function:
\begin{definition} \label{def:tau}
  Let $\tau:[0,1] \rightarrow [0,1]$ be the map such that:
  \begin{enumerate}
  \item $f(\tau(\rho))=f(\rho)$ for every $\rho\in[0,1]$;
    
  \item $\tau(\rho) \not= \rho$ for every $\rho\in[0,1]\setminus\{\rhocr\}$.
  \end{enumerate}
\end{definition}
Clearly, the function $\tau$ is well defined and satisfies
\begin{equation*}
  0\le \rho \le \rhocr\Longleftrightarrow \rhocr
  \le \tau(\rho) \le 1,\qquad \rhocr\le \rho \le 1\Longleftrightarrow 0 \le
  \tau(\rho) \le \rhocr.
\end{equation*}
\noindent
Given initial data (of Riemann type)
$\rho_{1,0},\ldots,\rho_{n+m,0}\in[0,1]$ we define:
%
\begin{enumerate}
\item for every $i\in\{1,\ldots,n\}$ 
  \begin{equation}
    \label{eq:gamma_i}
    \gamma^{max}_i=\left\{
      \begin{array}{ll}
        f(\rho_{i,0}), & \textrm{ if } 0\le\rho_{i,0}\le\rhocr,
        \vspace{.2cm}\\
        { }f(\rhocr), & \textrm{ if } \rhocr\le\rho_{i,0}\le1;
      \end{array}
    \right.
  \end{equation}

\item for every $j\in\{n+1,\ldots,n+m\}$
  \begin{equation}
    \label{eq:gamma_j}
    \gamma^{max}_j=\left\{
      \begin{array}{ll}
        f(\rhocr), & \textrm{ if } 0\le\rho_{j,0}\le\rhocr,
        \vspace{.2cm}\\
        { }f(\rho_{j,0}), & \textrm{ if } \rhocr\le\rho_{j,0}\le1;
      \end{array}
    \right.
  \end{equation}

\item for every $l\in\{1,\ldots,n+m\}$ 
  \begin{equation}
    \label{eq:omega_l}
    \Omega_l=[0,\gamma^{max}_l].
  \end{equation}
\end{enumerate}
\noindent
Moreover, we have the following result (see \cite{GP09}):
\begin{prop}
  It holds:
  \begin{enumerate}
  \item For every $i\in\{1,\ldots,n\}$, an element $\bar\gamma$ belongs to
    $\Omega_i$ if and only if there exists $\bar\rho_i\in[0,1]$ such that
    $f(\bar\rho_i)=\bar\gamma$ and point 2 of
    Definition~\ref{def:Riemann_solver} is satisfied.

  \item For every $j\in\{n+1,\ldots,n+m\}$, an element $\bar\gamma$ belongs to
    $\Omega_j$ if and only if there exists $\bar\rho_j\in[0,1]$ such that
    $f(\bar\rho_j)=\bar\gamma$ and point 3 of
    Definition~\ref{def:Riemann_solver} is satisfied.
  \end{enumerate}
\end{prop}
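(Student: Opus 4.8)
The plan is to prove both statements by the same elementary analysis of the classical Riemann problem, using only the concavity of $f$ and its monotonicity on each side of $\rhocr$; I will give the argument for the incoming roads (statement 1), the outgoing case being entirely symmetric. Recall that, by concavity of $f$, the entropy solution of the classical Riemann problem with left state $\rho_L$ and right state $\rho_R$ is a single shock of Rankine--Hugoniot speed $s=(f(\rho_R)-f(\rho_L))/(\rho_R-\rho_L)$ when $\rho_L<\rho_R$, a rarefaction wave connecting the intermediate densities in $[\rho_R,\rho_L]$ when $\rho_L>\rho_R$, and a single constant state when $\rho_L=\rho_R$. For an incoming road the relevant problem has $\rho_L=\rho_{i,0}$ and $\rho_R=\bar\rho_i$, so point 2 of Definition~\ref{def:Riemann_solver}, requiring all waves to have nonpositive speed, translates into two conditions: in the shock case $\bar\rho_i>\rho_{i,0}$ we need $s\le 0$, i.e. $f(\bar\rho_i)\le f(\rho_{i,0})$; in the rarefaction case $\bar\rho_i<\rho_{i,0}$ we need all fan speeds $\le 0$, which holds iff the connected densities $[\bar\rho_i,\rho_{i,0}]$ lie in the branch $[\rhocr,1]$ where $f$ is nonincreasing, i.e. iff $\bar\rho_i\ge\rhocr$.

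With these two conditions in hand I would compute the set of attainable fluxes $\bar\gamma=f(\bar\rho_i)$, distinguishing whether $\rho_{i,0}$ is a bad datum ($\rho_{i,0}\le\rhocr$) or a good one ($\rho_{i,0}\ge\rhocr$); this is precisely the dichotomy defining $\gamma^{max}_i$ in~\eqref{eq:gamma_i}. If $\rho_{i,0}\le\rhocr$, no admissible rarefaction exists (it would force $\rhocr\le\bar\rho_i<\rho_{i,0}\le\rhocr$), while the admissible shocks are exactly those with $\bar\rho_i\in[\tau(\rho_{i,0}),1]$, along which $f(\bar\rho_i)$ decreases from $f(\rho_{i,0})$ to $0$; together with the no-wave state $\bar\rho_i=\rho_{i,0}$ this gives $\bar\gamma\in[0,f(\rho_{i,0})]=\Omega_i$. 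If $\rho_{i,0}\ge\rhocr$, the admissible shocks $\bar\rho_i\in(\rho_{i,0},1]$ yield $\bar\gamma\in[0,f(\rho_{i,0}))$ and the admissible rarefactions $\bar\rho_i\in[\rhocr,\rho_{i,0})$ yield $\bar\gamma\in(f(\rho_{i,0}),f(\rhocr)]$, whose union is $[0,f(\rhocr)]=\Omega_i$. In both cases the continuous map $\bar\rho_i\mapsto f(\bar\rho_i)$ ranges exactly over $\Omega_i$ as $\bar\rho_i$ varies over the admissible states, which settles the two implications at once: every admissible $\bar\rho_i$ satisfies $f(\bar\rho_i)\in\Omega_i$, and conversely each $\bar\gamma\in\Omega_i$ is realized by choosing the appropriate admissible $\bar\rho_i$ on the shock or rarefaction branch.

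Statement 2 follows by the mirror argument obtained reversing space: now $\rho_L=\bar\rho_j$, $\rho_R=\rho_{j,0}$, and point 3 requires all waves to have nonnegative speed, which becomes $f(\bar\rho_j)\le f(\rho_{j,0})$ in the shock case $\bar\rho_j<\rho_{j,0}$ and $\bar\rho_j\le\rhocr$ in the rarefaction case $\bar\rho_j>\rho_{j,0}$. The same two-case computation --- bad datum $\rho_{j,0}\ge\rhocr$ versus good datum $\rho_{j,0}\le\rhocr$ --- shows that $f(\bar\rho_j)$ attains exactly $[0,\gamma^{max}_j]=\Omega_j$, in agreement with~\eqref{eq:gamma_j} and~\eqref{eq:omega_l}.

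The analysis is routine once the wave-speed conditions are set up, so I do not expect a serious obstacle; the only points requiring care are the boundary configurations carrying exactly zero speed, which decide whether the endpoint $\bar\gamma=\gamma^{max}_i$ belongs to $\Omega_i$. These are the stationary shock attained at $\bar\rho_i=\tau(\rho_{i,0})$ in the bad-datum case and the sonic rarefaction whose tail has zero speed, attained at $\bar\rho_i=\rhocr$ in the good-datum case. Reading ``negative speed'' as ``nonpositive speed'' (and symmetrically ``positive'' as ``nonnegative'' for the outgoing roads) is exactly the convention that makes $\Omega_l=[0,\gamma^{max}_l]$ the closed interval of~\eqref{eq:omega_l}, and I would fix it at the outset.
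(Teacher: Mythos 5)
Your proof is correct and coincides in substance with the intended one: the paper itself gives no proof of this proposition, deferring to \cite{GP09}, where the statement is established by exactly your wave-speed analysis of the classical Riemann problem for a concave unimodal flux (admissible traces $\{\rho_{i,0}\}\cup[\tau(\rho_{i,0}),1]$, resp.\ $[\rhocr,1]$, on incoming roads, and the mirror sets on outgoing ones, whose flux images are the closed intervals $[0,\gamma^{max}_l]=\Omega_l$). Your closing remark on the zero-speed configurations is also the right call: ``negative speed'' must be read so that the sonic rarefaction with stationary tail (and the no-wave state realizing the endpoint flux) is admitted, which is precisely the convention of \cite{GP09} ensuring that $\Omega_l$ is closed.
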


\section{Definition of the Priority Riemann Solver}\label{sec:PRS}

In this section we define a new Riemann solver
based on priorities. For this purpose,
we first fix a matrix $A$ belonging to the set of matrices:
\begin{equation}
  \label{eq:calA}
  \mathcal A:=\left\{
    \begin{array}{ll}
      A=\{a_{ji}\}_{\substack{i=1,\ldots,n\\ j=n+1,\ldots,n+m}}: &
      \begin{array}{l}
        0 \leq a_{ji} \leq   1\,\,      \forall i,j,\\
        \sum\limits_{j=n +1}^{n+m} a_{ji} =1\,\,\forall i
      \end{array}    
    \end{array}
  \right\}
\end{equation}
and a priority vector $P=(p_1,\ldots,p_n)\in\R^n$,
with $p_i>0$, $\sum_i p_i=1$,
indicating priorities among incoming roads.\\
Consider the closed, convex and non-empty set
  \begin{equation}
    \label{eq:omega}
    \Omega=\left\{
      (\gamma_1,\cdots,\gamma_n)\in\prod_{i=1}^n\Omega_i:
      A\cdot (\gamma_1,\cdots,\gamma_n)^T\in\prod_{j=n+1}^{n+m}\Omega_j
    \right\}\, ,
  \end{equation}
and define:
\begin{equation}
	\label{eq:hbar}
	\bar{h} = \sup \{ h\in \R^+ : hP \in \Omega\}.
\end{equation}

Given Riemann data $(\rho_{1,0},\ldots,\rho_{m+n,0})$,
we define a vector $Q=(\bar\gamma_1,\ldots,\bar\gamma_n)$ 
of incoming fluxes by a recursive procedure. First we explain
the procedure in steps and then provide a pseudo-code in Algorithm \ref{alg:PRS}.

\begin{itemize}
\item {\bf STEP 1.} For every $i\in\{1,\ldots,n\}$ define
\[
h_i= \max \{h:h\,p_i\leq \gamma_i^{max}\}=\frac{\gamma_i^{max}}{p_i},
\]
and for every $j\in\{n+1\ldots,n+m\}$ define
\[
h_j= \max \left\{h:(A\cdot(h P))_j=h\left(\sum_i a_{ji}p_i\right)\leq \gamma_j^{max}\right\}=\frac{\gamma_j^{max}}{\sum_i a_{ji}p_i}.
\]
In other words, $h_i$ is the maximal $t$ so that $hP$ verifies the flux constraint for the $i$-th road, similarly for $h_j$.\\
Set $\hbar=\min_{ij}\{h_i,h_j\}$.

We distinguish two cases:
\begin{itemize}
\item CASE 1. If there exists $j$ such that $h_j=\hbar$, then we set $Q=\hbar \,P$ and we are done.
\item CASE 2. Otherwise, let $I_1=\{i\in\{1,\ldots,n\}:h_i=\hbar\}$ (by assumption $I_1\not=\emptyset$).
We set $Q_i=\hbar \,p_i$  for $i\in I_1$ and we go to next step.
\end{itemize}
\item {\bf STEP S.} In step $S-1$ we defined a set $I_{S-1}$ and, by induction, all components
of $Q$ are fixed for $i\in J_S=I_1\cup\cdots \cup I_{S-1}$. We let $|J_S|<n$ denote the cardinality of $J_S$
and denote by $J_S^c$ the complement of $J_S$ in $\{1,\ldots,n\}$.
We now define $h_i$ for $i\in J_S^c$ by:
\[
h_i= \max \{h:h\,p_i\leq \gamma_i^{max}\}=\frac{\gamma_i^{max}}{p_i},
\]
and for every $j\in\{n+1\ldots,n+m\}$ define
\[
h_j= \max \left\{h:\ \sum_{i\in J_S} a_{ji}Q_i+h\left(\sum_{i\in J_S^c}a_{ji}p_i\right)\leq \gamma_j^{max}\right\}.
\]
We then proceed similarly to STEP 1, setting $\hbar=\min_{ij}\{h_i,h_j\}$ and distinguishing two cases:
\begin{itemize}
\item CASE 1. If there exists $j$ such that $h_j=\hbar$, then we set $Q_i=\hbar  \,P_i$ for $i\in J_S^c$
and we are done.
\item CASE 2. Otherwise, let $I_S=\{i\in J_S^c:h_i=\hbar\}$ (by assumption $I_S\not=\emptyset$).
We set $Q_i=\hbar \,p_i$  for $i\in I_S$. If $J_S\cup I_S=\{1,\ldots,n\}$ then we stop, otherwise we go to next step.
\end{itemize}
\end{itemize}
\begin{algorithm}[H]
\begin{algorithmic}
\STATE Set $J=\emptyset$ and $J^c=\{1,\ldots,n\}\setminus J$.
\WHILE{$\abs{J} < n$}
\STATE $\forall i\in J^c$ $\rightarrow$  $h_i= \max \{h:h\,p_i\leq \gamma_i^{max}\}=\frac{\gamma_i^{max}}{p_i},$
\STATE $\forall j\in\{n+1\ldots,n+m\}$ $\rightarrow$  $h_j= \sup \{h:\sum_{i\in J} a_{ji} Q_i +  h(\sum_{i\in J^c} a_{ji}p_i)\leq \gamma_j^{max}\}$.
\STATE Set $\hbar =\min_{ij}\{h_i,h_j\}$.
\IF {$\exists\ j $ s.t. $h_j=\hbar$ }
	\STATE Set $Q=\hbar \,P$ and $J=\{1,\ldots, n\}$.
\ELSE 
\STATE Set $I=\{i\in J^c:h_i=\hbar\}$ and $Q_i=\hbar\,p_i$  for $i\in I$.
\STATE Set $J=J\cup I$.
\ENDIF
\ENDWHILE
\caption{Recursive definition of $\PRS$}
\label{alg:PRS}
\end{algorithmic}
\end{algorithm}
We are now ready to define the Priority Riemann Solver 
(briefly $\PRS$).
\begin{definition}
Let $Q=(\bar\gamma_1,\ldots, \bar\gamma_n)$ be the vector of incoming fluxes defined by 
Algorithm \ref{alg:PRS}, then the vector of outgoing fluxes is 
given by $A\cdot Q^T= (\bar\gamma_{n+1},\ldots, \bar\gamma_{n+m})^T$.\\
For every $i\in\{1,\ldots,n\}$, set $\bar\rho_i$ equal either to $\rho_{i,0}$ if $f(\rho_{i,0})=\bar\gamma_i$, or to the solution to $f(\rho)=\bar\gamma_i$ such that $\bar\rho_i\ge\rhocr$.
For every $j\in\{n+1,\ldots,n+m\}$, set $\bar\rho_j$ equal either to $\rho_{j,0}$ if $f(\rho_{j,0})=\bar\gamma_j$, or to the solution
to $f(\rho)=\bar\gamma_j$ such that $\bar\rho_j\le\rhocr$. Finally, $\PRS:[0,1]^{n+m}\to[0,1]^{n+m}$ is given by
  \begin{equation}\label{eq:rs1_rho}
    \PRS(\rho_{1,0},\ldots,\rho_{n+m,0})
    =(\bar\rho_1,\ldots,\bar\rho_n,\bar\rho_{n+1},\ldots,\bar\rho_{n+m})\,.
  \end{equation}
\end{definition}

\section{Existence result for Cauchy problems}\label{sec:ex_result}
Given initial data of bounded variation $\rho_{0,1}(\cdot),\ldots,\rho_{0,n}(\cdot):[0,+\infty[\to [0,1]$
and $\rho_{0,n+1}(\cdot),\ldots,\rho_{0,n+m}(\cdot):]-\infty,0]\to [0,1]$
the corresponding Cauchy problem is defined by:
\begin{equation}
  \label{eq:CP}
  \left\{
    \begin{array}{ll}
      \begin{array}{l}
        \partial_t \rho_l+\partial_x f(\rho_l)=0,
        \vspace{.2cm}\\
        \rho_l(0,x)=\rho_{0,l}(x), 
      \end{array}
      & l\in\{1,\ldots,n+m\}.
    \end{array}
  \right.
\end{equation}
To solve Cauchy problems one can construct approximate solutions via Wave Front Tracking (WFT).
In simple words, one first approximate the initial data by piecewise constant functions,
then solve the corresponding Riemann problems within roads and at junctions approximating
rarefaction waves by a fan of rarefaction shocks and solve new Riemann problems when
waves interact with each other or with the junction. We refer the reader to \cite{GPbook} for details.
Notice that all waves in a WFT approximate solution are shocks, i.e. traveling discontinuities.
For every wave we will usually indicate by $\rho_l$, respectively $\rho_r$, the left limit,
respectively right limit, of the approximate solution at the discontinuity point.
To prove convergence of WFT approximations, one needs to estimate the number of waves, the number of 
wave interactions and provide estimates on the total variation of approximate solutions.
The general theory of \cite{GP09} is based on three properties which guarantee such estimates.
Along the same idea we define three general properties (P1)-(P3) which will ensure existence
of solutions.

The first property requires that equilibria are determined
only by bad data values (and coincides with (P1) in \cite{GP09}), more precisely:
%
%
\begin{definition}\label{def:Prop_P1}
  We say that a Riemann solver $\Rsol$ has the property (P1)
  if the
  following condition holds.
  Given $(\rho_{1,0},\ldots,\rho_{n+m,0})$ and 
  $(\rho'_{1,0},\ldots,\rho'_{n+m,0})$ two initial data such that
  $\rho_{l,0}=\rho'_{l,0}$ whenever either $\rho_{l,0}$ or $\rho'_{l,0}$
  is a bad datum, then
  \begin{equation}
    \Rsol(\rho_{1,0},\ldots,\rho_{n+m,0})=\Rsol(\rho'_{1,0},\ldots,\rho'_{n+m,0}).
  \end{equation}
\end{definition}

The second property requires for bounds in the increase of the flux
variation for waves interacting with $J$.
More precisely the latter is bounded in terms of the
strength of the interacting wave as well
as the sum of the changes in the incoming fluxes and in $\bar h$ (see (\ref{eq:hbar})).
Moreover, the increase in $\bar h$ is bounded by the strength of the interacting wave.
%
%
\begin{definition}\label{def:Prop_P2}
  We say that a Riemann solver $\Rsol$ has the property (P2) if
  there exists a constant $C \ge 1$ such that the following condition
  holds.
  For every equilibrium $(\rho_{1,0}, \ldots, \rho_{n+m,0})$ of $\Rr$
  and for every wave $(\rho_{i},\rho_{i,0})$ for $i=1,\ldots,n$
(respectively $(\rho_{j,0},\rho_{j})$ for $j=n+1,\ldots,n+m$)
  interacting with $J$ at time $\bar t > 0$ and producing waves in the
  arcs according to $\Rr$, we have
  \begin{equation}
    \begin{array}{c}
      \TV_f (\bar t+) - \TV_f (\bar t-)\\
      \le C \min \left \{
        \abs{f(\rho_{l,0}) - f(\rho_l)}, 
        \abs{\Gamma(\bar t+) - \Gamma(\bar t-)}+\abs{\bar{h}(\bar{t}+)-\bar{h}(\bar{t}-)}
      \right \}
    \end{array}
  \end{equation}
and
\begin{equation}
	\label{eq:deltaT}
	\bar{h}(\bar{t}+)-\bar{h}(\bar{t}-) \leq C \abs{f(\rho_{l,0})-f(\rho_l)}.
\end{equation}
\end{definition}

Finally, we state the third property: if a wave interacts with $J$ and provokes a flux decrease then 
$\bar h$ decreases and the increase of $\Gamma$ is bounded by the change in  $\bar h$.
\begin{definition}\label{def:Prop_P3}
  We say that a Riemann solver $\Rsol$ has the property (P3) if the following holds.
  For every equilibrium
  $(\rho_{1,0}, \ldots, \rho_{n+m,0})$ of $\Rr$
  and for every wave $(\rho_{i},\rho_{i,0})$ with $f(\rho_i) < f(\rho_{i,0})$ for $i=1,\ldots,n$
(respectively $(\rho_{j,0},\rho_{j})$ with $f(\rho_j) < f(\rho_{j,0})$ for $j=n+1,\ldots,n+m$)
  interacting with $J$ at time $\bar t > 0$ and producing waves in the
  arcs according to $\Rr$, we have
  \begin{equation}
    \Gamma (\bar t+) - \Gamma (\bar t-)\leq C \abs{\bar{h}(\bar{t}+)-\bar{h}(\bar{t}-)},
  \end{equation}
  \begin{equation}
    \bar{h}(\bar{t}+)\leq \bar{h}(\bar{t}-).
  \end{equation}
\end{definition}

\begin{theorem}
\label{th:CPsol}
If a Riemann solver satisfies (P1)-(P3), then
every Cauchy problem with initial data of bounded variation admits a weak solution.
\end{theorem}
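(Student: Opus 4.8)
The plan is to follow the Wave Front Tracking (WFT) program outlined in the paragraph preceding the theorem, adapting the existence argument of \cite{GP09} to the new set of properties (P1)--(P3). First I would fix a sequence of piecewise constant approximations $\rho^\nu_{0,l}$ of the initial data converging to $\rho_{0,l}$ in $L^1_{loc}$ with uniformly bounded total variation, and for each $\nu$ construct a WFT approximate solution: solve the Riemann problems inside each road and at the junction $J$ (using $\PRS$), approximate rarefactions by fans of small shocks, and iterate by solving the new Riemann problem generated at every wave interaction, either between two waves inside a road or between a wave and the junction. The goal is to show that these approximations are well-defined for all $t>0$ (finitely many waves and interactions on every compact time interval) and that they admit a subsequence converging to a weak solution in the sense of Definition \ref{def:weak_solution}.

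The heart of the argument is a uniform bound on $\TV_f(t)$, the total variation of the flux on the whole network, along the approximate solutions. I would track the evolution of $\TV_f$ across the three types of events. For waves interacting inside a road, the classical theory for scalar conservation laws gives that $\TV_f$ does not increase. The delicate events are interactions with the junction, and here I would split into the flux-increasing and flux-decreasing cases exactly as the properties are designed. For a wave that increases the flux reaching $J$, property (P2) bounds the jump $\TV_f(\bar t+)-\TV_f(\bar t-)$ by a constant times the minimum of the interacting wave strength and the combined change $\abs{\Gamma(\bar t+)-\Gamma(\bar t-)}+\abs{\bar h(\bar t+)-\bar h(\bar t-)}$, with the auxiliary estimate \eqref{eq:deltaT} controlling the growth of $\bar h$ by the wave strength. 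For a flux-decreasing wave, property (P3) forces $\bar h$ to decrease and bounds the increase of $\Gamma$ by the change in $\bar h$. The point of introducing $\bar h$ alongside $\Gamma$ is to build a Lyapunov-type functional of the form $\TV_f(t)+K\,\bar h(t)$ (or a weighted combination of the variations in time of $\Gamma$ and $\bar h$) that is nonincreasing across junction interactions up to the externally supplied wave strengths, which themselves sum to a finite quantity controlled by the initial total variation. Property (P1), stating that equilibria depend only on bad data, is what guarantees that after a wave hits $J$ the outgoing solution is genuinely an equilibrium and that spurious oscillations are not created, which keeps the interaction bookkeeping finite.

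Having obtained a uniform bound on $\TV_f$ and hence (via the flux bounds and the structure of $f$ under hypothesis (H)) on the total variation of the densities $\rho^\nu_l$ in space, uniformly in $t$, I would next establish the standard time-continuity estimate $\norm{\rho^\nu_l(t,\cdot)-\rho^\nu_l(s,\cdot)}_{L^1}\le L\,\abs{t-s}$ coming from finite propagation speed and the flux bound, so that Helly's theorem applies. This yields a subsequence converging in $L^1_{loc}$ to some limit $\rho_l$. I would then verify that the limit is a weak solution: the entropy admissibility on each arc passes to the limit by the usual Lax--Oleinik/WFT arguments, the bounded variation in $x$ for a.e. $t$ survives the limit, and the Rankine--Hugoniot condition \eqref{eq:RH} at the junction holds because each approximate solution satisfies conservation at $J$ by construction of $\PRS$ (property 1 of Definition \ref{def:Riemann_solver}) and the traces converge.

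The main obstacle I anticipate is establishing the uniform $\TV_f$ bound, i.e. showing that the functional built from $\TV_f$, $\Gamma$ and $\bar h$ does not blow up over the infinitely many junction interactions. Precisely because property (P3) of \cite{GP09} fails here (flux-decreasing waves need not decrease $\Gamma$, as the Appendix example shows), the standard argument controlling $\TV_f$ directly by the time variation of $\Gamma$ no longer works, and the whole burden shifts onto the new quantity $\bar h$: one must show that the coupled estimates in (P2) and (P3) close up into a genuinely nonincreasing functional and, separately, that the number of interactions and waves stays finite so that the telescoping sum of increments is legitimate. Verifying that (P2)--(P3) are strong enough to replace the missing monotonicity of $\Gamma$, and handling the interaction-counting in the presence of the recursive, multi-step structure of $\PRS$, is where the real work lies; the concrete verification that $\PRS$ itself satisfies (P1)--(P3) for junctions with $n,m\le 2$ is deferred to the Appendix and is the complementary hard part of the paper.
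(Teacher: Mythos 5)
Your overall architecture coincides with the paper's: WFT construction enabled by (P1), a uniform bound on $\TV_f$ routed through the time variations of $\Gamma$ and $\bar h$, then Helly compactness and passage to the limit. But at the decisive step you only \emph{name} the difficulty --- showing that the coupled estimates in (P2)--(P3) ``close up into a genuinely nonincreasing functional'' --- without supplying the idea that actually closes it, and this is a genuine gap. The missing ingredient is the classification of waves into \emph{original} ones (generated at $t=0$ inside the roads) and \emph{returning} (not original) ones generated by $J$, together with the geometric fact of Proposition \ref{prop:returning-waves} and Corollary \ref{cor:decr_gamma}: a wave emitted by the junction on an incoming road keeps its right state $\ge\rhocr$ through all subsequent interactions, so when it comes back to $J$ with positive speed it necessarily carries a flux \emph{decrease}, $f(\bar\rho_l)<f(\bar\rho_r)$ (and symmetrically on outgoing roads). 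This is exactly what breaks the feedback loop you worry about: every flux-increasing interaction at $J$ comes from an original wave, whose total strength is controlled by $\TV(f(\rho_0))$, while every returning wave falls under the hypotheses of (P3). Concretely, the paper first bounds $\TV(\bar h)$ by splitting $\PV(\bar h)=\PV^{O}(\bar h)+\PV^{R}(\bar h)$: estimate \eqref{eq:deltaT} in (P2) gives $\PV^{O}(\bar h)\le C\,\TV(f(\rho_0))$, while (P3) applied to returning waves forces $\bar h(\bar t+)\le\bar h(\bar t-)$, hence $\PV^{R}(\bar h)=0$; since $\bar h\le \fmax/\max_i p_i$, the positive-variation bound upgrades to a bound on $\TV(\bar h)$. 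The same two-step scheme then bounds $\TV(\Gamma)$: $\PV^{O}(\Gamma)\le (C+2)\,\TV(f(\rho_0))$ as in \cite[Lemma 12]{GP09}, and $\PV^{R}(\Gamma)\le C\,\TV(\bar h)$ by (P3). Only after both total variations are bounded does (P2) telescope the junction jumps, $\sum_{s}\Delta\TV_f(s)\le C\bigl(\TV(\Gamma)+\TV(\bar h)\bigr)$, yielding the uniform $\TV_f$ bound.

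Two smaller inaccuracies are worth flagging. First, your proposed functional $\TV_f(t)+K\,\bar h(t)$ monotone ``up to external inputs'' is not quite the mechanism: the argument is sequential (bound $\PV(\bar h)$, convert to $\TV(\bar h)$ via the a priori bound on $\bar h$, then bound $\TV(\Gamma)$ partly in terms of $\TV(\bar h)$, then telescope $\TV_f$), and without the returning-wave dichotomy a single Lyapunov combination does not obviously dominate the junction jumps, since (P2) alone permits $\bar h$ and $\Gamma$ to oscillate under repeated re-interactions. Second, (P1) is not what ``keeps the interaction bookkeeping finite'' in the sense of preventing oscillations of the solver; it is invoked, exactly as in \cite{GP09}, to guarantee that WFT approximate solutions can be constructed at all (finitely many waves and interactions on compact time intervals), after which the compactness and limit steps you describe are indeed the standard ones.
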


In order to prove Theorem \ref{th:CPsol}, we need first to provide some definition
and results. We start by giving the following:
\begin{definition}
A wave along a WFT approximate solution generated at time $t=0$ inside a road
is called original, while the ones generated by $J$ are called not original. 
If two original waves interact, then the resulting wave is still
called original, 
while if an original wave interacts with a not original wave
then the resulting wave is not original.
\end{definition}
We now have the following result:
\begin{prop}\label{prop:returning-waves}
Let $(\rho_l,\rho_r)$ be a wave generated on an incoming road $I$ from the junction at time $\bar s$.
Assume that there exists a time $\bar t>\bar s$ at which the wave interacts with $J$
(after interacting with waves inside $I$) and call $\bar\rho_l$, respectively
$\bar\rho_r$, its left, respectively, right limit at $\bar t-$. 
If $I$ is an incoming road then
we have $\bar\rho_r\geq \rhocr$ and $f(\bar\rho_l)<f(\bar\rho_r)$.
If $I$ is an outgoing road then
we have $\bar\rho_l\leq \rhocr$ and $f(\bar\rho_r)<f(\bar\rho_l)$.
\end{prop}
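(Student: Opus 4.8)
The plan is to prove the statement for an incoming road; the outgoing case then follows by the obvious left–right symmetry, exchanging the roles of the two one-sided limits, of positive and negative front speeds, and of the thresholds $\ge\rhocr$ and $\le\rhocr$, and using that $\PRS$ assigns an outgoing road a junction value $\le\rhocr$ exactly as it assigns $\ge\rhocr$ to an incoming one. So fix an incoming road $I=I_i=\,]-\infty,0]$ with the junction at $x=0$, and recall that in a WFT solution every front is a jump travelling at its Rankine--Hugoniot speed. I will call \emph{junction-side region} the portion of $I$ between the tracked wave and $J$, and I track the \emph{right} state of the wave, i.e.\ the state on its junction side. First I would record two facts. (i) A nontrivial wave emitted by $J$ on $I$ has the form $(\rho_{i,0},\bar\rho_i)$, where $\bar\rho_i$ is the new junction value; since $\bar\rho_i$ equals either $\rho_{i,0}$ (giving a trivial front) or the root of $f(\rho)=\bar\gamma_i$ with $\bar\rho_i\ge\rhocr$, every nontrivial emitted wave has right state $\ge\rhocr$ and, by Definition~\ref{def:Riemann_solver}.2, negative speed. (ii) Whenever the datum of $I$ entering a resolution is itself $\ge\rhocr$, formula \eqref{eq:gamma_i} gives $\gamma^{max}_i=f(\rhocr)$ and $\PRS$ again returns $\bar\rho_i\ge\rhocr$: a congested incoming datum always produces a congested junction value.

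The heart of the argument is the invariant: as long as the tracked wave has not returned to $J$, every state in the junction-side region is $\ge\rhocr$. I would prove this by induction on the interaction times in $\,]\bar s,\bar t[$. At $\bar s$ the region reduces to the single value $\bar\rho_i\ge\rhocr$ by (i). New states can enter the region only through a wave emitted at $x=0$, because fronts merge rather than cross, so nothing can pass the tracked wave, which bounds the region on the left. Such an emission may be triggered by a wave arriving along $I$ or along any other arc, but in every case the datum of $I$ used in the resolution is the current boundary state of the region, which is $\ge\rhocr$ by the inductive hypothesis; by fact (ii) the new junction value, together with the left state of the emitted front (the previous boundary state, also $\ge\rhocr$), is $\ge\rhocr$. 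Internal merges, and merges of the tracked wave with the leftmost front of the region, only delete states or replace the tracked wave's right state by a region state, so no value $<\rhocr$ is ever created. This closes the induction. Since at time $\bar t-$ the tracked wave is the rightmost front, the constant state between it and $J$ is its right state and is a region state, whence $\bar\rho_r\ge\rhocr$.

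It remains to upgrade $\bar\rho_r\ge\rhocr$ to the flux inequality. At $\bar t$ the wave reaches $J$ from inside $I$, hence with positive speed, $\bigl(f(\bar\rho_r)-f(\bar\rho_l)\bigr)/(\bar\rho_r-\bar\rho_l)>0$. If one had $f(\bar\rho_l)\ge f(\bar\rho_r)$, positivity of the speed would force $\bar\rho_l>\bar\rho_r\ge\rhocr$, so both states would lie on the strictly decreasing branch of $f$, giving $f(\bar\rho_l)\le f(\bar\rho_r)$; together with $f(\bar\rho_l)\ge f(\bar\rho_r)$ and injectivity of $f$ on $[\rhocr,1]$ this yields $\bar\rho_l=\bar\rho_r$, contradicting $\bar\rho_l>\bar\rho_r$. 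Therefore $f(\bar\rho_l)<f(\bar\rho_r)$, which together with $\bar\rho_r\ge\rhocr$ is exactly the claim for incoming roads.

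The hard part will be the invariant of the second paragraph: one must argue that no free-flow state (value $<\rhocr$) can ever slip into the junction-side region while the wave is away. The two structural ingredients that make it work are the no-crossing property of fronts in a WFT solution, so the region can only be fed from the junction, and the monotonicity built into $\PRS$, namely that a congested incoming datum is mapped to a congested junction value. The delicate point is that resolutions triggered by other arcs do not reintroduce free-flow states on $I$, which is precisely why the inductive bookkeeping on the boundary datum is needed; once this is secured, the flux inequality is automatic and the whole argument transfers verbatim to the outgoing case under the symmetry substitution described above.
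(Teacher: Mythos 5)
Your proposal is correct and follows essentially the same route as the paper's proof: emitted waves have junction-side state $\geq\rhocr$ by the negative-speed requirement and the definition of $\PRS$, this property is preserved under all interactions because any wave arriving from the junction side also originates from $J$, and the positive speed at the return time $\bar t$ then forces $\bar\rho_l<\rhocr$ and $f(\bar\rho_l)<f(\bar\rho_r)$. Your explicit invariant on the junction-side region, together with fact (ii) that $\PRS$ maps a congested incoming datum to a congested junction value, merely spells out in inductive form what the paper asserts in a single line about waves coming from the right, so the two arguments coincide in substance.
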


\begin{proof}
We prove the result for an incoming road, the other case being similar.
First notice that $(\rho_l,\rho_r)$ must have negative speed,
thus if $\rho_l<\rhocr$ then $\rho_r>\tau(\rho_r)\geq\rhocr$, while if $\rho_l\geq\rhocr$
then $\rho_r\geq\rhocr$. Therefore in both cases we have $\rho_r\geq\rhocr$.
If the wave interacts with waves coming from the left then the value of $\rho_r$ does not change.
If the wave interacts with a wave $(\rho_r,\hat\rho)$ coming from the right, then the wave
was generated from the junction $J$ (or obtained by interactions of waves generated from $J$) 
and thus must satisfy $\hat\rho\geq\rhocr$. Finally, $\bar\rho_r\geq\rhocr$.
Then, since $(\bar\rho_l,\bar\rho_r)$ must have positive speed,
we deduce that $\bar\rho_l<\rhocr$ and $f(\bar\rho_l)<f(\bar\rho_r)$, thus we conclude.
\end{proof}
From Proposition \ref{prop:returning-waves} we have the following:
\begin{corollary}\label{cor:decr_gamma}
If a wave $(\rho_l,\rho_r)$ interacts with $J$ from an incoming road
and satisfy $f(\rho_l)>f(\rho_r)$ then it is an original wave.
If a wave $(\rho_l,\rho_r)$ interacts with $J$ from an outgoing road
and satisfy $f(\rho_r)>f(\rho_l)$ then it is an original wave.
\end{corollary}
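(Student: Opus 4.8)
The plan is to argue by contraposition, reducing both statements directly to Proposition \ref{prop:returning-waves}. For the first claim, consider a wave $(\rho_l,\rho_r)$ interacting with $J$ from an incoming road and suppose, for contradiction, that it is \emph{not} original. By the recursive definition of original versus not original waves, a not original wave is either generated directly by $J$ or produced by an original wave interacting with a not original one; in either case, tracing its genealogy backward along the incoming road $I$, it descends from a wave generated by $J$ at some earlier time $\bar s$ that subsequently interacts only with waves inside $I$ before returning to $J$. This is exactly the setting of Proposition \ref{prop:returning-waves}, with the left and right limits of the wave at the interaction time $\bar t-$ playing the roles of $\bar\rho_l$ and $\bar\rho_r$.

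Invoking the incoming-road conclusion of that proposition then gives $f(\rho_l)=f(\bar\rho_l)<f(\bar\rho_r)=f(\rho_r)$, which contradicts the standing hypothesis $f(\rho_l)>f(\rho_r)$. Hence the wave cannot be not original, i.e.\ it is original. The second claim follows by the symmetric argument on an outgoing road: if a wave $(\rho_l,\rho_r)$ interacting with $J$ from an outgoing road were not original, the same genealogy reasoning would place it under the outgoing-road hypotheses of Proposition \ref{prop:returning-waves}, yielding $f(\rho_r)=f(\bar\rho_r)<f(\bar\rho_l)=f(\rho_l)$, contradicting $f(\rho_r)>f(\rho_l)$; thus the wave is original.

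The only delicate point I expect is the genealogy step: I must verify that a not original wave reaching $J$ can legitimately be identified with a wave generated by $J$ that has returned after interacting solely with waves interior to the road, so that Proposition \ref{prop:returning-waves} applies without modification. The recursive definition of not original waves, combined with the fact established inside the proof of that proposition---namely that any wave striking our wave from the right was itself generated by $J$ and therefore keeps the tracked right state $\geq\rhocr$---is precisely what validates this identification. No new estimate beyond what the proposition already supplies is required, so the corollary is immediate once this bookkeeping is in place.
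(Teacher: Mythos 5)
Your proposal is correct and is essentially the paper's own argument: the paper states Corollary~\ref{cor:decr_gamma} as an immediate consequence of Proposition~\ref{prop:returning-waves}, i.e.\ exactly the contrapositive you spell out, with the genealogy bookkeeping (any not original wave on a road descends from a wave generated by $J$ on that same road and interacts only with waves inside that road, so the proposition applies) left implicit there. No gap.
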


\begin{proofof}{Theorem \ref{th:CPsol}} 
WFT approximate solutions can be constructed because (P1) holds true (see \cite{GP09}).

We now prove that the total variation of the flux $\TV(f)$ remains uniformly bounded in time
along WFT approximate solutions.
The main idea is to first bound the total variation
in time of $\bar{h}$ and then of $\Gamma$.
This in turn will provide the desired estimate.\\
Let us indicate with $\PV$ the positive variation of a function
and with $\NV$ the negative one. Then:
\[
\TV(\bar{h})=\PV(\bar{h})+\NV(\bar{h})
\]
and
\[
\PV(\bar{h})=\PV^{O}(\bar{h})+\PV^{R}(\bar{h}),
\]
where $\PV^{O}$ is the variation due to interactions
of original waves with the junction and $\PV^{R}$
the one due to returning waves (i.e. not original).\\
From (P2) we get:
\[
\PV^{O}(\bar{h})\leq C \TV(f(\rho_0))
\leq C\ \max_{\rho\in [0,1]} \abs{f'(\rho)}\ \TV(\rho_0)
\]
and from (P3) and Corollary \ref{cor:decr_gamma}
it follows $\PV^{R}(\bar{h})=0$.
Then $\PV(\bar{h})$ is bounded and,
since $\bar{h}\leq \fmax/\max_i p_i$,
also $\TV(\bar{h})$ is bounded.
Similarly, for $\Gamma$ we can write
\[
\TV(\Gamma)=\PV(\Gamma)+\NV(\Gamma)
\]
and
\[
\PV(\Gamma)=\PV^{O}(\Gamma)+\PV^{R}(\Gamma).
\]
Following the proof of \cite[Lemma 12]{GP09}, (P2) implies
\[
\PV^{O}(\Gamma)\leq (C+2)\TV(f(\rho_0))\leq (C+2)\ \max_{\rho\in [0,1]} \abs{f'(\rho)}\ \TV(\rho_0).
\]
From (P3) we have that 
\[
\PV^{R}(\Gamma) \leq C \TV(\bar{h}) ,
\]
which we just proved to be bounded. 
Therefore $\TV(\Gamma)$ is also bounded.\\
Now, define $\mathrm{Int}$ the set of times at which a wave
interacts with the junction $J$ and, for $s\in\mathrm{Int}$
let us indicate by $\Delta\TV_f(s)$ the change due to the interaction.
By (P2) we have
\[ 
\TV(f(t))\leq \TV(f(\rho_0))+\sum_{s\in\mathrm{Int},s\leq t} 
\Delta \TV_f(s)
\]
\[
\leq  \max_{\rho\in [0,1]} \abs{f'(\rho)}\ \TV(\rho_0)+ C(\TV(\Gamma)+\TV(\bar{h})).
\]

Once $\TV(f)$ is bounded, one can obtain a bound on $\TV(\rho)$
as in \cite{GP09} and conclude by passing to the limit in WFT approximate solutions.
\end{proofof}
 
\begin{prop}\label{prop:P123}
The Priority Riemann Solver $\PRS$ satisfies (P1)-(P3) for junctions with $n\leq 2$, $m\leq 2$ and $0<a_{ji}<1$ for all $i,j$.
\end{prop}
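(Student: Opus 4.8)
The plan is to verify the three properties directly by exhaustively analyzing the (finitely many) configurations arising from junctions with $n\le 2$ incoming and $m\le 2$ outgoing roads. Property (P1) should be the easiest: it asserts that the equilibrium depends only on the bad data, and I would argue that $\gamma_i^{max}$ and $\gamma_j^{max}$ (hence $\Omega$, $\bar h$, and the entire output of Algorithm \ref{alg:PRS}) depend on $\rho_{l,0}$ only through the bad/good classification. Indeed, a good incoming datum ($\rho_{i,0}\in[\rhocr,1]$) gives $\gamma_i^{max}=f(\rhocr)$ regardless of its precise value, and symmetrically for a good outgoing datum; only bad data feed their actual flux value into the constraints defining $\Omega$. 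Since $\PRS$ is a function of the $\gamma_l^{max}$ alone, (P1) follows exactly as in \cite{GP09}.

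For (P2) and (P3) the key is to relate the quantities $\TV_f$, $\Gamma$, and $\bar h$ before and after a single wave hits $J$. My strategy is to fix an equilibrium, let one wave interact from road $l$ changing $f(\rho_l)\to f(\rho_{l,0})$, recompute the solver, and track how $\bar h$ and the fluxes move. The central structural observation to establish first is that, in the two-incoming/two-outgoing setting, Algorithm \ref{alg:PRS} terminates in at most two passes, so the output vector $Q$ is an explicit piecewise-linear function of the data $(\gamma_1^{max},\dots,\gamma_{n+m}^{max})$. I would partition the parameter space according to which case (CASE 1 vs.\ CASE 2) fires at each step and which index realizes the minimum $\hbar=\min_{ij}\{h_i,h_j\}$; on each such region $\bar h$ and each $\bar\gamma_l$ are affine in the data. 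The bound (\ref{eq:deltaT}), $\bar h(\bar t+)-\bar h(\bar t-)\le C|f(\rho_{l,0})-f(\rho_l)|$, then reduces to bounding the finitely many affine coefficients (Lipschitz constants) across regions, and the estimate $\TV_f(\bar t+)-\TV_f(\bar t-)\le C\,|f(\rho_{l,0})-f(\rho_l)|$ follows because each outgoing/incoming equilibrium flux is a Lipschitz function of the incoming data through $A$ and the solver. The alternative bound by $|\Gamma(\bar t+)-\Gamma(\bar t-)|+|\bar h(\bar t+)-\bar h(\bar t-)|$ is the more delicate half and requires checking that a wave which leaves $\Gamma$ and $\bar h$ essentially unchanged cannot create large flux variation on the arcs.

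For (P3) I would use Proposition \ref{prop:returning-waves} and Corollary \ref{cor:decr_gamma}: a wave producing a flux decrease that arrives at $J$ must carry a bad datum toward the junction, i.e.\ it shrinks the feasible set $\Omega$ (a good-to-bad transition tightens exactly one constraint defining $\Omega$). Since $\bar h=\sup\{h:hP\in\Omega\}$ and $\Omega$ only shrinks, monotonicity gives $\bar h(\bar t+)\le\bar h(\bar t-)$ immediately. The remaining inequality $\Gamma(\bar t+)-\Gamma(\bar t-)\le C|\bar h(\bar t+)-\bar h(\bar t-)|$ would again be checked region-by-region: because $\Gamma=\sum_i\bar\gamma_i$ is controlled by $\bar h$ through the priority direction $P$, a drop in $\bar h$ forces a commensurate drop (or bounded increase) in $\Gamma$.

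The main obstacle I anticipate is the case analysis itself. The introduction flags that (P3) fails for the original formulation precisely in ``case A2 with flux increase'' (Figure \ref{fig:CaseA2_increasing}), so the delicate point is that the new properties are engineered to absorb exactly that pathology by coupling $\Gamma$ to $\bar h$; I would need to verify carefully that in every sub-case where $\Gamma$ increases after a returning wave, the increase is genuinely charged against a decrease in $\bar h$ and not left uncontrolled. Because the number of cases (which constraint binds, whether the algorithm stops at step 1 or step 2, whether the interacting wave is on an incoming or outgoing road, and the direction of the flux jump) is large, the honest proof is a bookkeeping-heavy exhaustion — which is presumably why the authors relegate it to the Appendix — and getting the constant $C\ge 1$ uniform across all regions is the part most likely to hide a subtle error.
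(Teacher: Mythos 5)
Your overall strategy is the same as the paper's: (P1) is proved exactly as you indicate (the output of Algorithm \ref{alg:PRS} depends on the data only through $A$, $P$ and the sets $\Omega_l$, hence only through bad data), and (P2)--(P3) are established in the Appendix by precisely the piecewise-affine exhaustion you outline. The paper classifies the initial equilibrium as demand constrained (Case A), supply constrained with the priority line exiting through a demand constraint (Case B), or supply constrained with the priority line exiting through a supply constraint (Case C); within each class it takes the interacting wave on each road, with each sign of flux change, and computes $\Delta\TV(f)$, $\Delta\Gamma$, $\Delta\bar h$ explicitly as affine expressions in intermediate fluxes $\gamma_{l,k}$. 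So as a roadmap your proposal is correct. But it remains a roadmap: the entire mathematical content of the proposition is the execution of the dozen-plus subcases, and phrases like ``would again be checked region-by-region'' stand in for exactly the estimates that constitute the proof. Note also that the hypothesis $0<a_{ji}<1$ is not just where ``a subtle error might hide'': the explicit constants involve ratios such as $a_{32}/a_{31}$ and $a_{41}/a_{42}$, which blow up as entries vanish, so strict positivity is where the uniform $C$ is genuinely used (and indeed where $\SPRS$ is designed to differ).

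The one place where you offer an actual argument rather than a plan --- the monotonicity proof of $\bar h(\bar t+)\le\bar h(\bar t-)$ in (P3) --- has a genuine gap. Tightening the single constraint carried by the interacting wave shows only that $\bar h$ computed from the \emph{interaction data} is $\le\bar h(\bar t-)$. But $\bar h(\bar t+)$ is evaluated at the traces of the new equilibrium produced by $\PRS$, and there constraints on the \emph{other} roads can loosen: at an equilibrium every non-saturated road has a good trace, hence cap $f(\rhocr)$, while a saturated road has cap equal to its flux; after the interaction a road saturated at $\bar t-$ (say an outgoing road at its supply cap) may see its flux drop strictly below the old cap, its new trace becomes good, and its constraint jumps up to $f(\rhocr)$. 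Hence $\Omega(\bar t+)\not\subseteq\Omega(\bar t-)$ in general, and ``monotonicity gives $\bar h(\bar t+)\le\bar h(\bar t-)$ immediately'' does not follow; one must check that the released constraints never govern the supremum along $P$ afterwards, which is exactly what the paper's explicit formulas for $\Delta\bar h$ verify case by case (e.g.\ $\Delta\bar h=(\gamma_1-\gamma_{1,0})/p_1$ in Case B1, $\Delta\bar h=(\gamma_3-\gamma_{3,0})/(a_{31}p_1+a_{32}p_2)$ in Case C3). Relatedly, your heuristic that $\Gamma$ is ``controlled by $\bar h$ through the priority direction'' fails off the priority line: in the demand-constrained configurations the equilibrium sits at a corner of the constraint box, not on $hP$, and in the very pathology you cite (Case A2 with flux increase, Figure \ref{fig:CaseA2_increasing}) $\Delta\Gamma$ contains sign-mixed terms such as $\left(1-\frac{a_{32}}{a_{31}}\right)(\gamma_{2,2}-\gamma_{2,1})$, so that only the combination $\abs{\Delta\Gamma}+\abs{\Delta\bar h}$ --- not either quantity alone --- controls $\Delta\TV(f)$; establishing this in each configuration is the substance the proposal defers.
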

The technical proof is deferred  to Section \ref{sec:appendix}.

\section{Solver with softer priorities}\label{sec:SPRS}
%

In this section we define a different version of the Riemann solver
that uses priorities as softer constraints. In particular, this solver will differ from the solver $\PRS$ defined in Section \ref{sec:PRS} when one of the entries of the matrix $A$, defined in \eqref{eq:calA}, vanishes, see Figure \ref{fig:diff_solver}. 
Notice that the softer priority of the $\SPRS$ will allow
some flow from road $2$ to pass through the junction,
when the maximal flow from road $1$ is already reached.
This reflects the situation where the physical geometry
of the junction allows for traffic from road $2$ to road $4$
(no traffic goes from $2$ to $3$) even if the traffic from
road $1$ to road $3$ is maximal and has higher priority.
\begin{figure}[ht]
\begin{subfigure}[H]{0.4\textwidth}
	\centering
	\begin{tikzpicture}
\draw[<->](5,0)--(0,0)--(0,5);
\draw(4,0)--(4,3)--(0,3);
\draw(2,0)--(2,4.5);
\draw(0,0)--(5,3);
\draw(0,4.5)--(5,1.5);
\draw[fill=gray](2,1.2)circle(0.1);
\node[below]at(5,0){$\gamma_1$};
\node[left]at(0,5){$\gamma_2$};
\node[below]at(5,3){$P$};
\node[left]at(2,1.3){$Q$};
\node[below]at(5,2){$\gamma_4$};
\node[right]at(2,4.5){$\gamma_3$};
\end{tikzpicture}
	\caption{Priority Riemann Solver}
	\label{fig:PRS}
\end{subfigure}\hspace{2cm} 
\begin{subfigure}[H]{0.4\textwidth}
	\centering
	\begin{tikzpicture}
\draw[<->](5,0)--(0,0)--(0,5);
\draw(4,0)--(4,3)--(0,3);
\draw(2,0)--(2,4.5);
\draw(0,0)--(5,3);
\draw(0,4.5)--(5,1.5);
\draw[fill=gray](2,3)circle(0.1);
\node[below]at(5,0){$\gamma_1$};
\node[left]at(0,5){$\gamma_2$};
\node[below]at(5,3){$P$};
\node[left]at(2,2.8){$Q$};
\node[below]at(5,2){$\gamma_4$};
\node[right]at(2,4.5){$\gamma_3$};
\end{tikzpicture}
	\caption{Solver with softer priorities}
	\label{fig:SPRS}
\end{subfigure}
\caption{Different solution for the two solvers: $\PRS$
and $\SPRS$. The distribution matrix $A$ satisfies
$a_{13}=0$, i.e. no cars enter road $3$ from road $1$.
Correspondingly the constraint on the flux $\gamma_3$
is represented by a vertical line. 
The $\PRS$ selects the point $Q$ in (a), while $\SPRS$
the point $Q$ in (b). Since priorities are softer
the flux through the junction of the $\SPRS$ solution
is higher than that of the $\SPRS$.
}
\label{fig:diff_solver}
\end{figure}
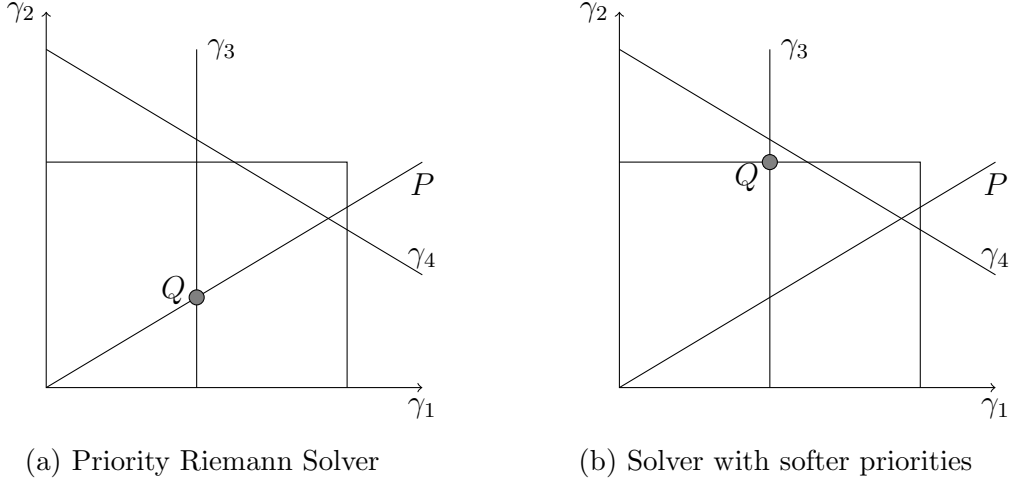
For this purpose,
we consider a matrix $A$ that 
may have $a_{ji} = 0$ for some $i,j$
and a priority vector $P=(p_1,\ldots,p_n)\in\R^n$,
with $p_i>0$, $\sum_i p_i=1$.\\
Then the Riemann solver with softer priorities (briefly $\SPRS$) can be defined by the following recursive algorithm:
\begin{algorithm}[H]
\begin{algorithmic}
\STATE Set $J=\emptyset$ and $J^c=\{1,\ldots,n\}\setminus J$.
\WHILE{$\abs{J} < n$}
\STATE $\forall i\in J^c$ $\rightarrow$  $h_i= \max \{h:h\,p_i\leq \gamma_i^{max}\}=\frac{\gamma_i^{max}}{p_i},$
\STATE $\forall j\in\{n+1\ldots,n+m\}$ $\rightarrow$  $h_j= \sup \{h:\sum_{i\in J} a_{ji} Q_i +  h(\sum_{i\in J^c} a_{ji}p_i)\leq \gamma_j^{max}\}$.
\STATE Set $\hbar =\min_{ij}\{h_i,h_j\}$.
\IF {$\exists\ j $ s.t. $h_j=\hbar$}
	\STATE Set $I=\{i\in J^c: a_{ji}\not=0\}$ and $Q_i=\hbar\,p_i$  for $i\in I$.
\ELSE 
\STATE Set $I=\{i\in J^c:h_i=\hbar\}$ and $Q_i=\hbar\,p_i$  for $i\in I$.
\ENDIF
\STATE Set $J=J\cup I$.
\ENDWHILE
\caption{Recursive definition of $\SPRS$}
\label{alg:SPRS}
\end{algorithmic}
\end{algorithm}

We are now ready to define the Softer Priority Riemann Solver.
\begin{definition}
Let $Q=(\bar\gamma_1,\ldots,\bar\gamma_n)$ be the vector of incoming fluxes defined by 
Algorithm \ref{alg:SPRS}, then the vector of outgoing fluxes is 
given by $A\cdot Q^T=(\bar\gamma_{n+1,\ldots,n+m})^T$.\\
For every $i\in\{1,\ldots,n\}$, set $\bar\rho_i$ equal either to $\rho_{i,0}$ if $f(\rho_{i,0})=\bar\gamma_i$, or to the solution to $f(\rho)=\bar\gamma_i$ such that $\bar\rho_i\ge\rhocr$.
For every $j\in\{n+1,\ldots,n+m\}$, set $\bar\rho_j$ equal either to $\rho_{j,0}$ if $f(\rho_{j,0})=\bar\gamma_j$, or to the solution
to $f(\rho)=\bar\gamma_j$ such that $\bar\rho_j\le\rhocr$. Finally, $\SPRS:[0,1]^{n+m}\to[0,1]^{n+m}$ is given by
  \begin{equation}\label{eq:rs2_rho}
    \SPRS(\rho_{1,0},\ldots,\rho_{n+m,0})
    =(\bar\rho_1,\ldots,\bar\rho_n,\bar\rho_{n+1},\ldots,\bar\rho_{n+m})\,.
  \end{equation}
\end{definition}
\section{Numerical scheme and numerical simulations}\label{sec:num_sim}
%
To illustrate the $\PRS$ and $\SPRS$ dynamics we provide some simulations
based on the well-known Godunov scheme \cite{G59} on networks (see \cite{GPbook}),
which is based on solutions to Riemann problems.\\
Define a numerical grid on $[0,T] \times \R$ given by:
\begin{itemize}
\item $\dx$ is the fixed space grid size;
\item $\dt^\nu$, $\nu\in\N$, is the  time grid size satisfying the CFL condition \cite{CFL67}:
\begin{equation}
\label{eq:CFL}
\dt^\nu \max_{j\in\Z} \abs{f'(u_j^\nu)}\leq \dfrac{1}{2}\dx
\end{equation}
\item $(t^\nu, x_j) = (t^{\nu-1}+\dt^\nu, j\dx)$ for $\nu\in \N$ and $j\in \Z$ are the grid points.
\end{itemize}
Consider a scalar conservation laws equipped with initial data:
\begin{equation}
\label{eq:SCL_GOD}
\begin{array}{ll}
\partial_t u + \partial_x f(u) = 0, \qquad &x\in \R,\ t\in [0,T],\\
u(0,x) = u_0(x) ,\qquad &x\in \R .
\end{array}
\end{equation}
An approximate solution of the problem is constructed first by taking a piecewise constant approximation of the initial data
\begin{equation}
\label{eq:PW_ID_GOD}
u^{0}_j = \dfrac{1}{\dx}\int^{\xj{+}{1}}_{\xj{-}{1}}u_0(x)dx,  \qquad j\in \Z,
\end{equation}
and then defining $u^\nu_j$ recursively from $u^{0}_j$ as follows.
Under the CFL ~\eqref{eq:CFL}
the waves generated by different Riemann problem at the cell interfaces do not interact
and the scheme can be written as follows
\begin{equation}
\label{eq:god_sheme}
u_j^{\nu+1}=u_j^{\nu}-\dfrac{\dt^\nu}{\dx}\left(G(u_j^{\nu},u^{\nu}_{j+1}) - G(u^\nu_{j-1},u_j^\nu)\right),
\end{equation}
where the numerical flux $G$ is given by
\begin{equation}
\label{eq:god_flux}
G(u,v) = \left\lbrace
\begin{array}{ll}
\min_{z\in [u,v]} f(z) \qquad &\text{if } u\leq v  \\
\max_{z\in [v,u]} f(z) \qquad &\text{if } v\leq u.
\end{array}
\right.
\end{equation}
To impose boundary conditions and conditions at junctions we use the classical approach introduced in \cite{BNP06}.\\
\textbf{Boundary conditions}.\\
Each road is divided into $M$ cells, numbered from $1$ to $M$. Boundary conditions are imposed using ghost cells. For an incoming road $I_i$ we define:
\begin{equation}
u_{i,1}^{\nu+1} = u_{i,1}^\nu -\dfrac{\dt^\nu}{\dx}\left(G(u_{i,1}^{\nu},u^{\nu}_{i,2}) - G(u^\nu_{i,0},u_{i,1}^\nu)\right)
\end{equation}
where $u^\nu_{i,0}$ is the value of the density at the boundary.\\
The outgoing boundary for $I_j$ is treated in the same way by defining:
\begin{equation}
u_{j,M}^{\nu+1} = u_{j,M}^\nu -\dfrac{\dt^\nu}{\dx}\left(G(u_{j,M}^{\nu},u^{\nu}_{j,M+1}) - G(u^\nu_{j,M-1},u_{j,M}^\nu)\right)
\end{equation}
with $u^\nu_{i,M+1}$ the value of the density at the outgoing boundary.\\
\textbf{Conditions at the junction}.\\
For $I_i$ with $i \in \{1,\ldots n\}$ that is connected at the junction at the right endpoint we set:
\begin{equation}
u_{i,M}^{\nu+1}=u_{i,M}^\nu - \dfrac{\dt^\nu}{\dx}\left(Q_i^\nu - G(u^\nu_{i,M-1},u_{i,M}^\nu)\right) \qquad \text{for } i \in \{1,\ldots n\},
\end{equation}
while for the outgoing roads, connected at the junction with the left endpoint we have:
\begin{equation}
u_{j,1}^{\nu+1}=u_{j,1}^\nu - \dfrac{\dt^\nu}{\dx}\left(G(u_{j,1}^{\nu},u^{\nu}_{j,2}) - Q_j^\nu \right) \qquad \text{for } j \in \{n+1,\ldots n+m\},
\end{equation}
where $Q_i^\nu, Q_j^\nu$ are the incoming and outgoing fluxes
given by the Riemann solvers at junction corresponding to the initial data 
\[
(u_{1,M}^\nu,\ldots,u_{n,M}^\nu,u_{n+1,1}^\nu,\ldots,u_{n+m,1}^\nu)
\]
(see Algorithms \ref{alg:PRS}, \ref{alg:SPRS}).

\subsection{Numerical results}
For the simulations, we set the length of each road equal to $1$ and incoming roads are parametrized by the interval $I_i = [-1,\ 0]$ while outgoing roads are given by $I_j = [0,\ 1]$,  with the junction placed at $x=0$. Moreover, we fix $f(\rho)=\rho(1-\rho)$, thus $\rhocr=0.5.$
\begin{enumerate}
\item \textbf{Case I:} Comparison $\PRS$ vs. $\SPRS$. \\
This case illustrates the different dynamics given by the two Riemann solvers proposed in this article.\\ 
We consider a junction with $2$ incoming roads ($I_1,\ I_2$) and $2$ outgoing roads ($I_3, I_4$).
We fix the matrix $A$ and the priority vector $P$ as follows:
\begin{equation}
\label{eq:CaseI_MatA}
A= \begin{bmatrix}
0.6 & 0 \\
0.4 & 1\\
\end{bmatrix} \quad
P = \begin{bmatrix}
0.7 & 0.3
\end{bmatrix}.
\end{equation}
We consider the following initial data:
\begin{equation}
\label{eq:CaseI_InData}
\rho_{1,0} = 0.6, \qquad \rho_{2,0} = 0.2, \qquad \rho_{3,0} = 0.85, \qquad \rho_{4,0} = 0.2.
\end{equation}
\begin{figure}[!ht]
	\begin{subfigure}[H]{\textwidth}
		\centering
		\includegraphics[width=0.31\textwidth]{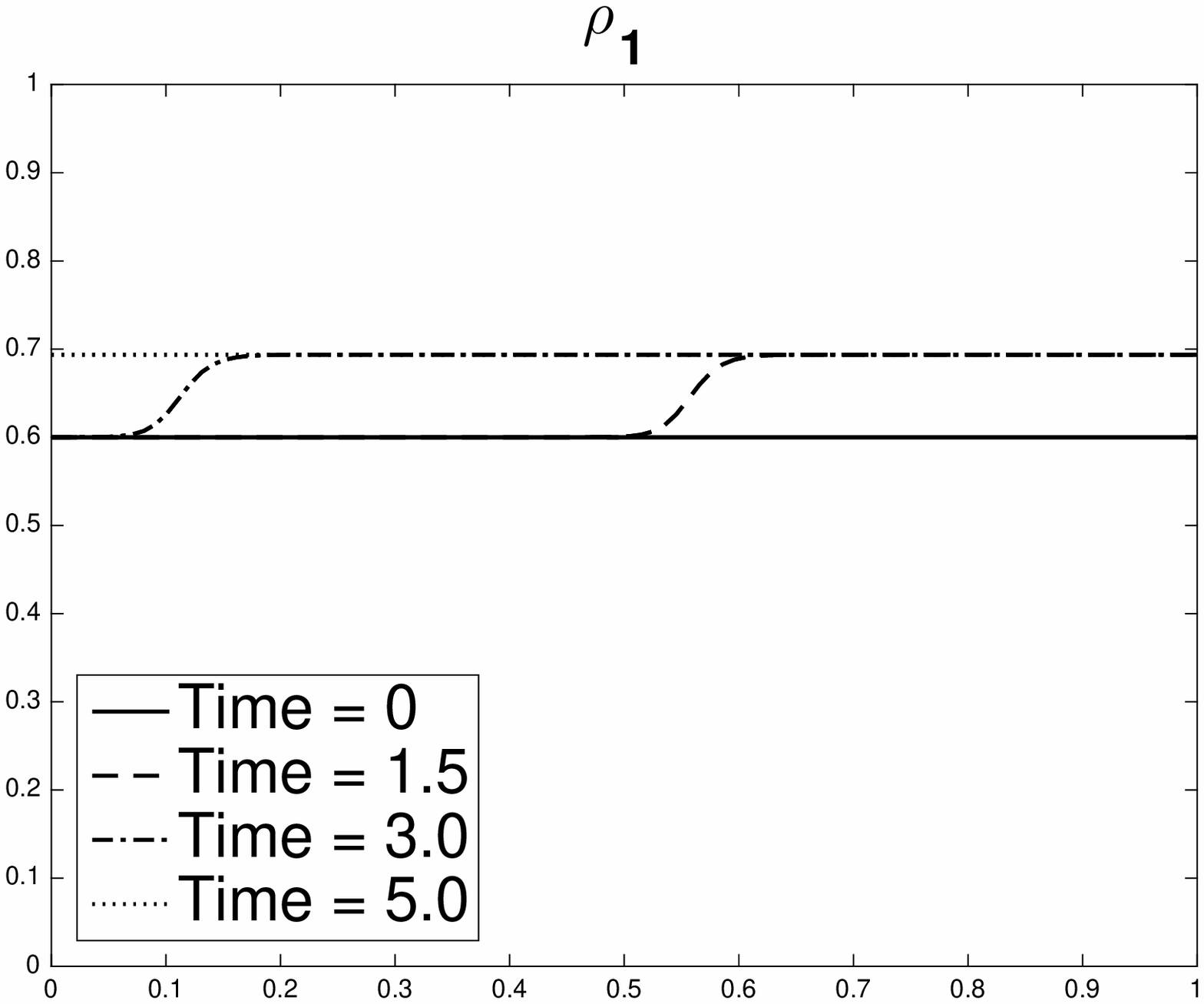} 
		\hskip 1cm
		\includegraphics[width=0.31\textwidth]{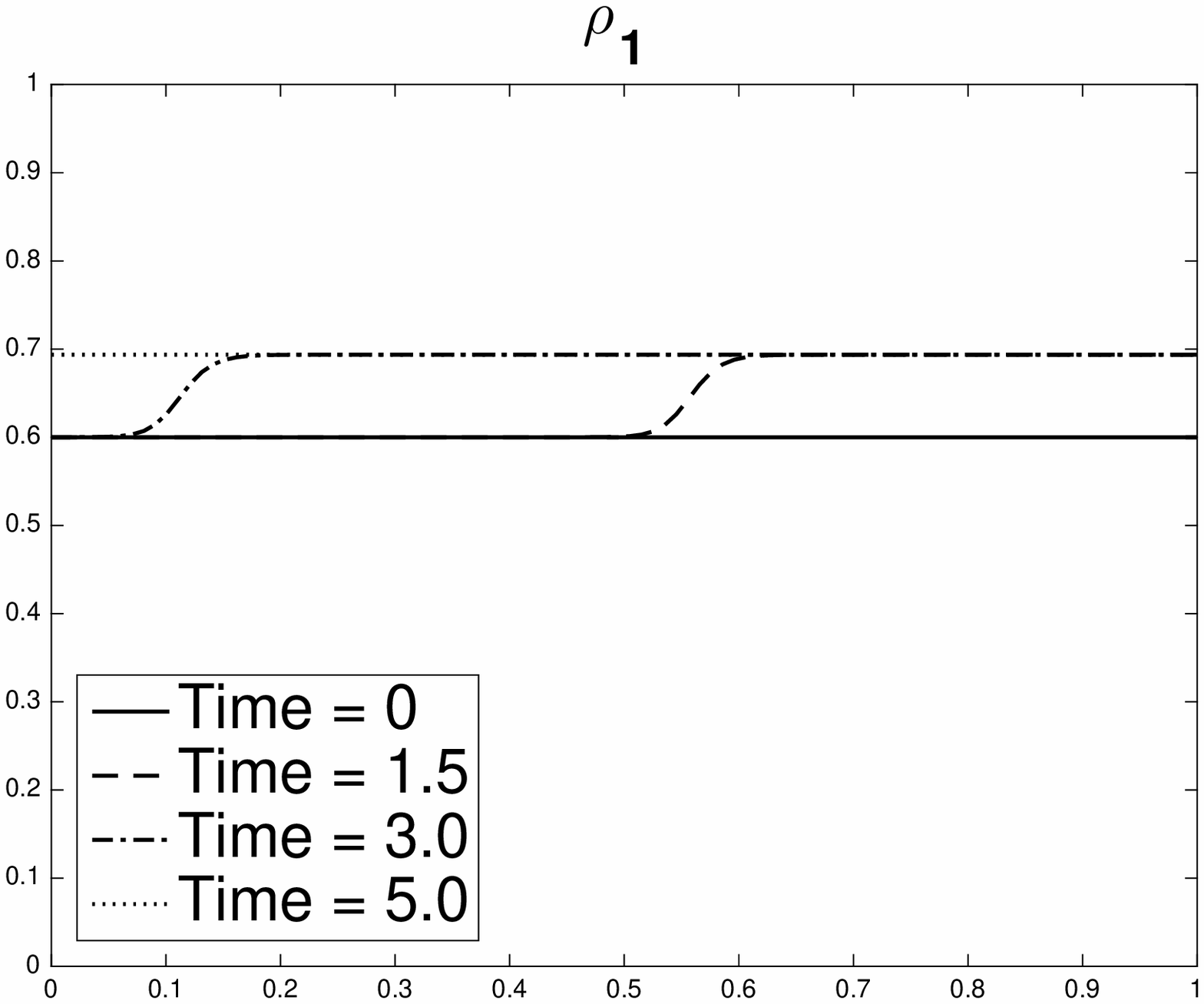}
		\caption{Road 1}
	\end{subfigure}\\
	\begin{subfigure}[H]{\textwidth}
		\centering
		\includegraphics[width=0.31\textwidth]{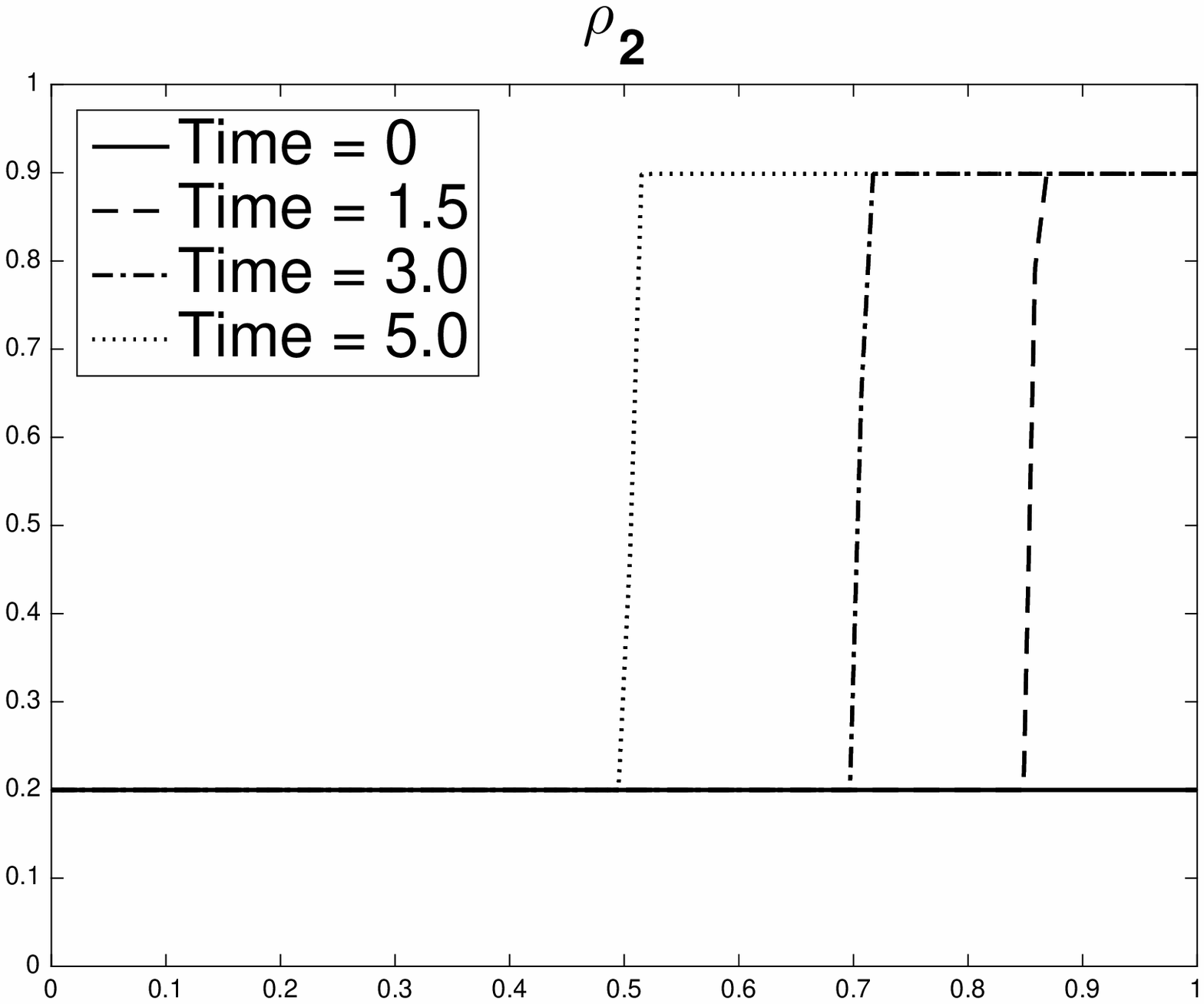}
		\hskip 1cm
		\includegraphics[width=0.31\textwidth]{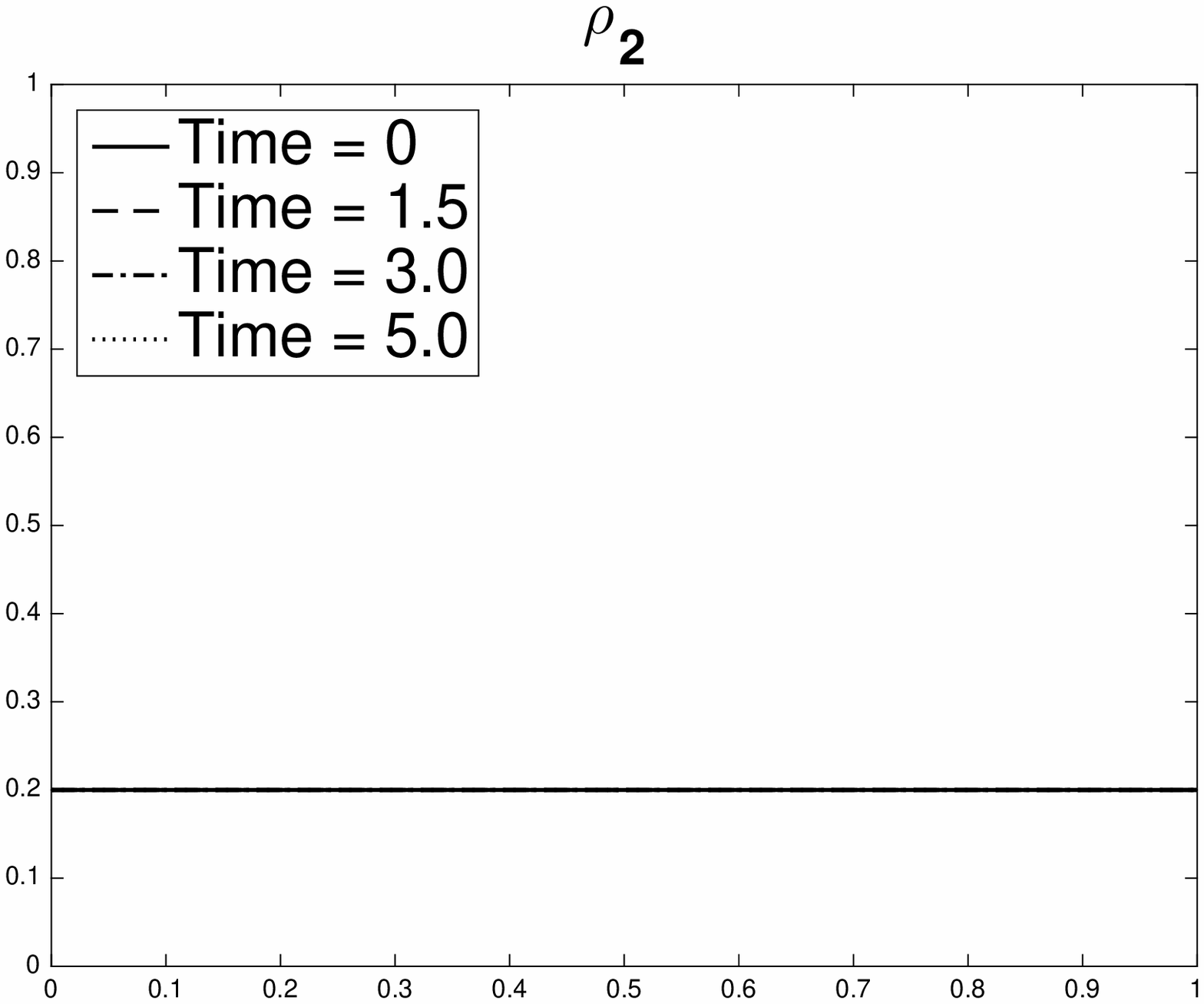}
		\caption{Road 2}
	\end{subfigure}
	\begin{subfigure}[H]{\textwidth}
		\centering
		\includegraphics[width=0.31\textwidth]{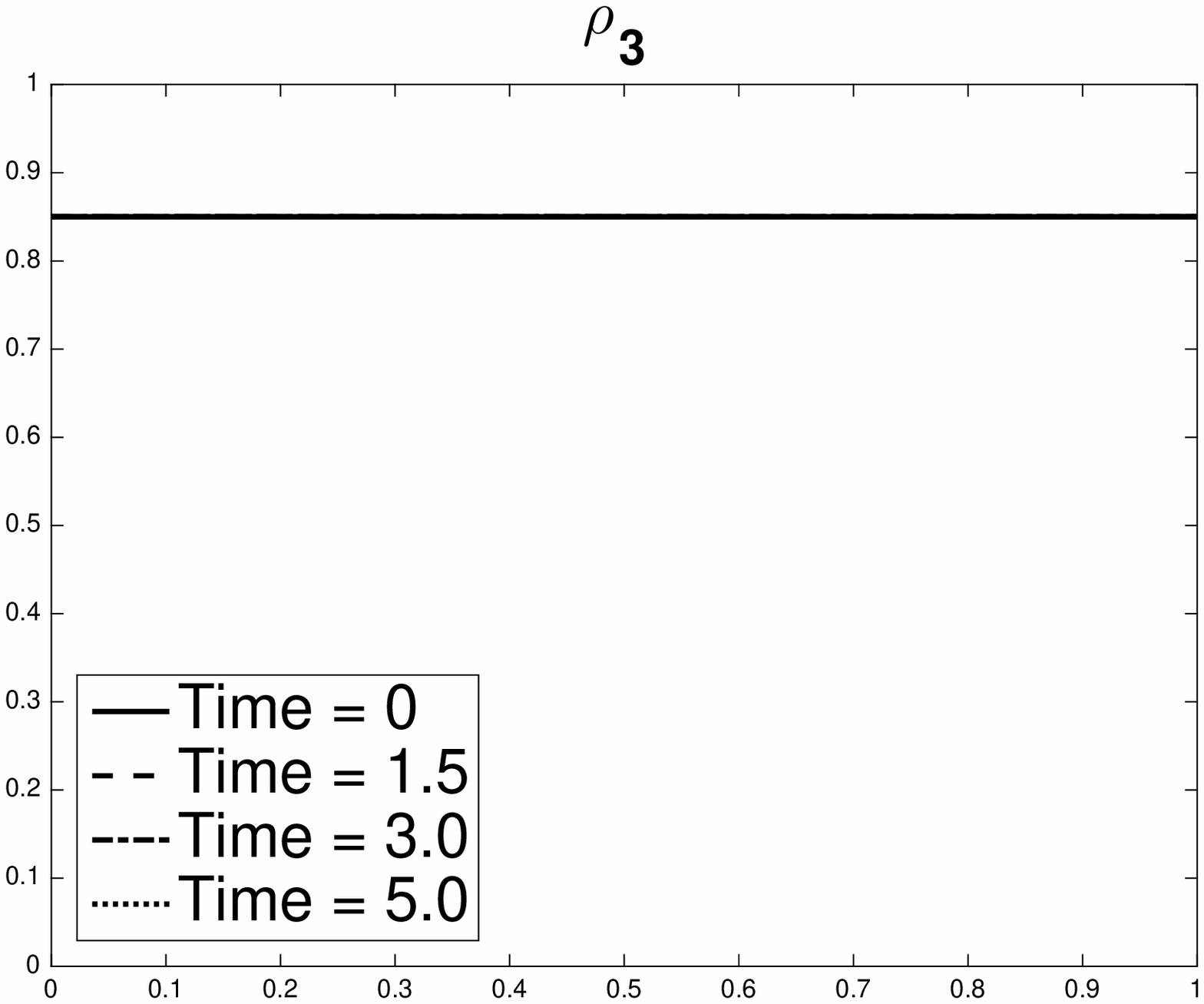}
		\hskip 1cm
		\includegraphics[width=0.31\textwidth]{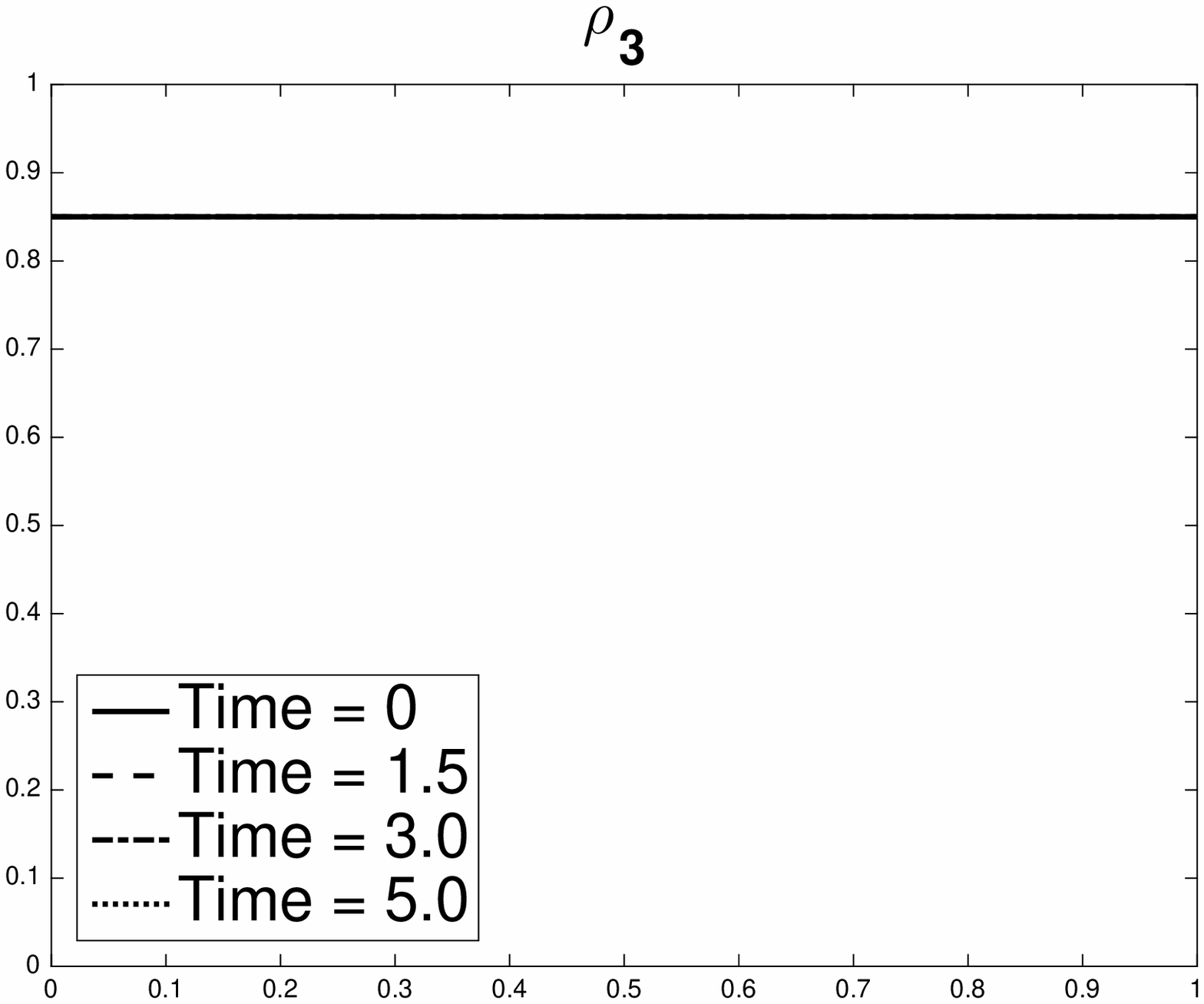}
		\caption{Road 3}
	\end{subfigure}
	\begin{subfigure}[H]{\textwidth}
		\centering
		\includegraphics[width=0.31\textwidth]{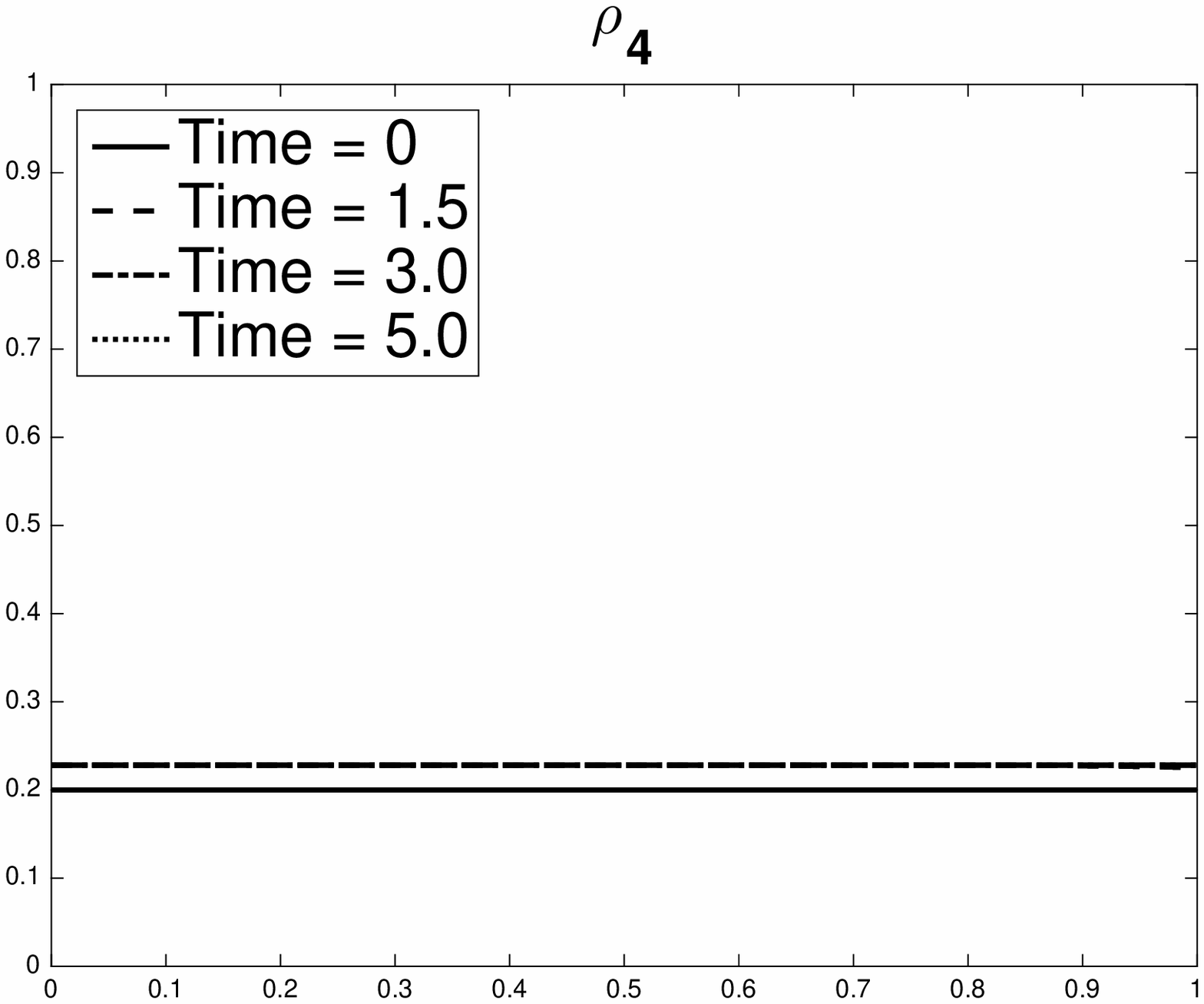}
		\hskip 1cm
		\includegraphics[width=0.31\textwidth]{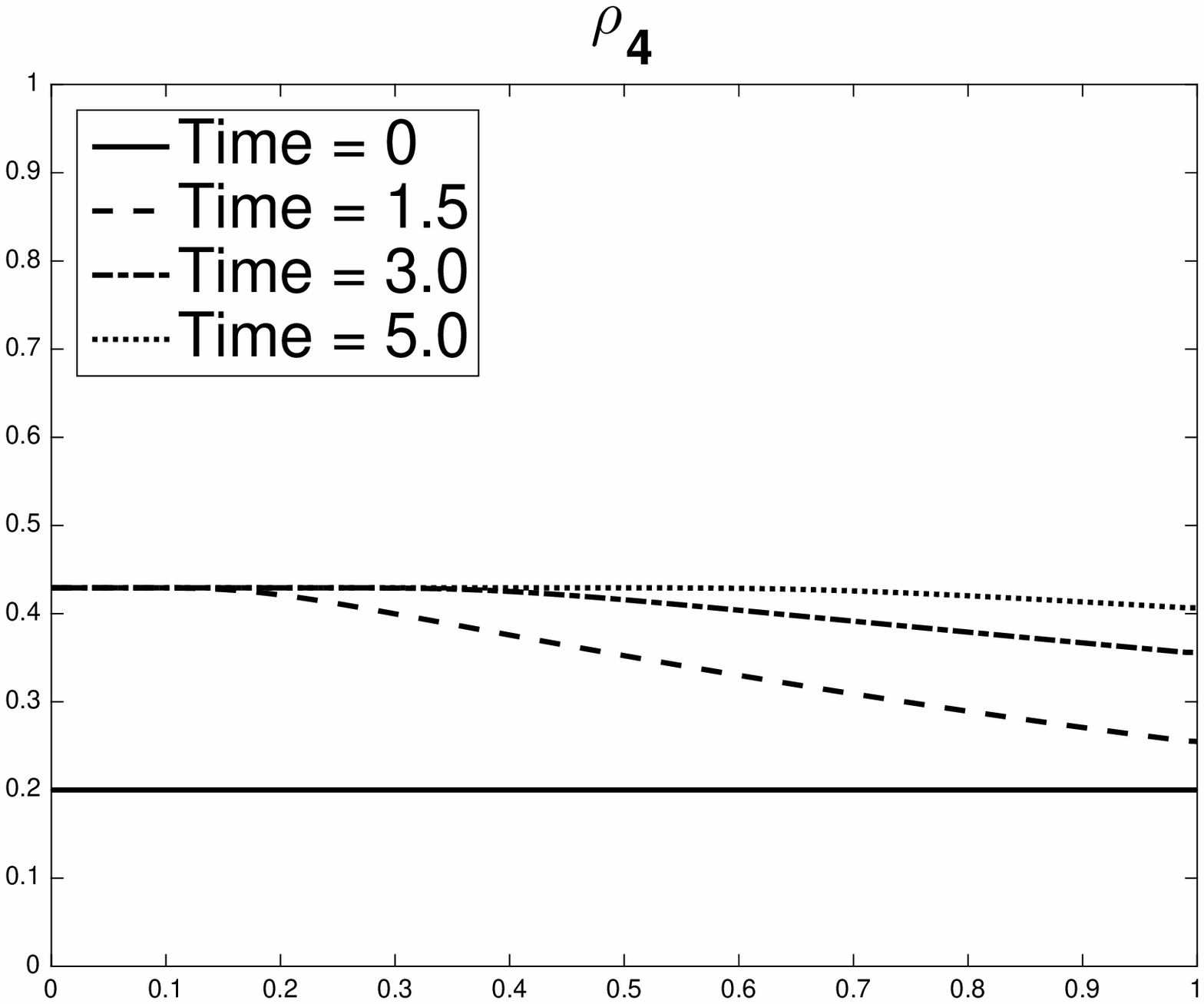}
		\caption{Road 4}
	\end{subfigure}
\caption{Case I : Solution of the problem using $\PRS$ on the left and $\SPRS$ on the right.}
\label{fig:caseI}
\end{figure}
The different results of the simulations (see Figures \ref{fig:caseI}) can be seen in particular in road $2$ ad $4$. We observe that $\SPRS$ allows more flux through the junction than $\PRS$, for which we observe the formation of a big shock
moving backwards on road 2.

\item \textbf{Case II:} Comparison $\PRS$ vs. $\Rsol_{\mathrm{CGP}}$. \\
We propose here a comparison between the $\PRS$ with the Riemann solver proposed by Coclite, Garavello and Piccoli in \cite{CGP05} and briefly referred to as $\Rsol_{\mathrm{CGP}}$. \\ 
We consider a $2\times 2$ junction and 
we fix the matrix $A$ and the priority vector $P$ as follows:
\begin{equation}
\label{eq:CaseII_MatA}
A= \begin{bmatrix}
0.5 & 0.6 \\
0.5 & 0.4\\
\end{bmatrix} \quad
P = \begin{bmatrix}
0.7 & 0.3
\end{bmatrix}.
\end{equation}
We consider the following initial data:
\begin{equation}
\label{eq:CaseII_InData}
\rho_{1,0} = 0.2, \qquad \rho_{2,0} = 0.6, \qquad \rho_{3,0} = 0.3, \qquad \rho_{4,0} = 0.8.
\end{equation}
\begin{figure}[!ht]
	\begin{subfigure}[H]{\textwidth}
		\centering
		\includegraphics[width=0.31\textwidth]{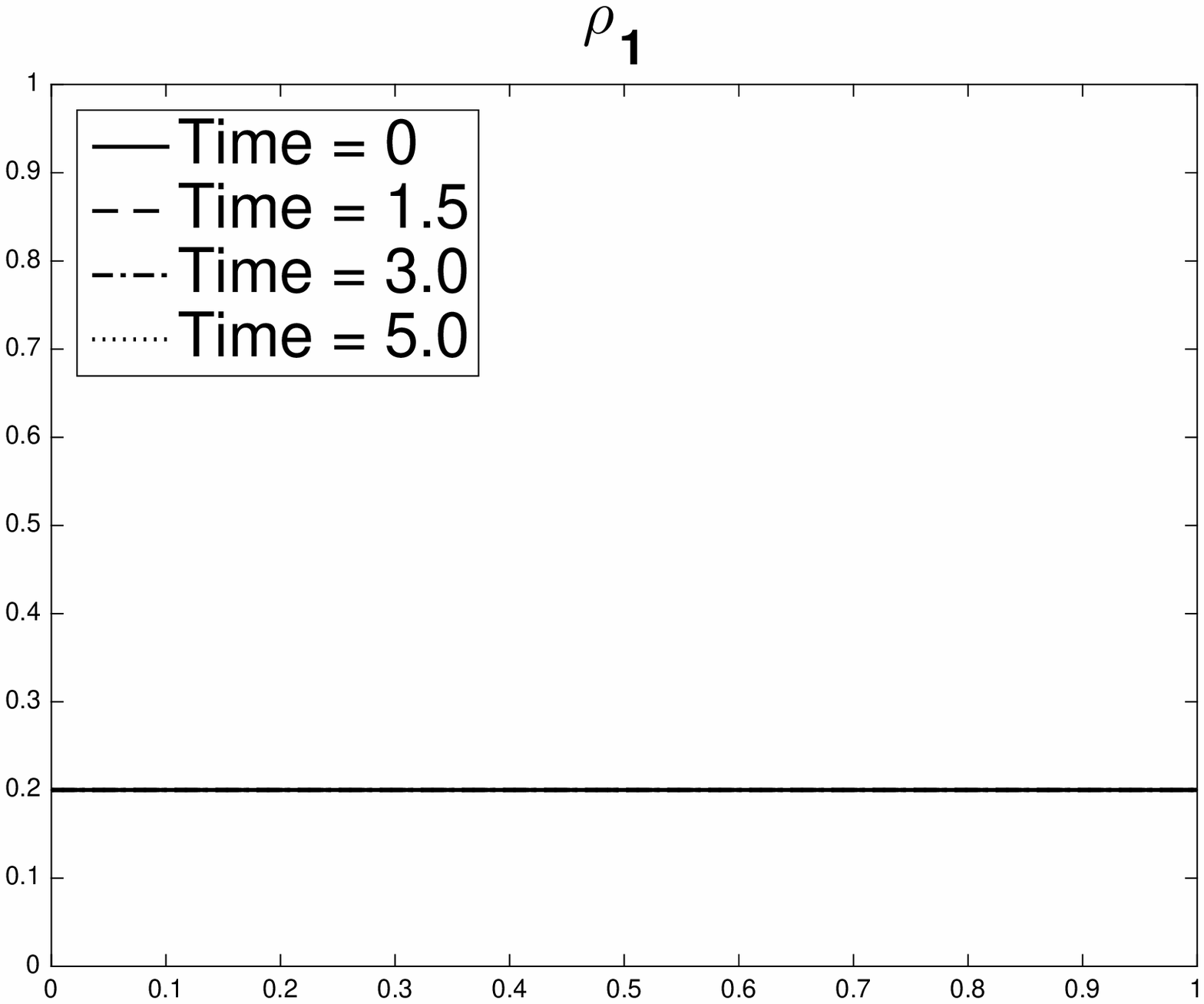}
		\hskip 1cm
		\includegraphics[width=0.31\textwidth]{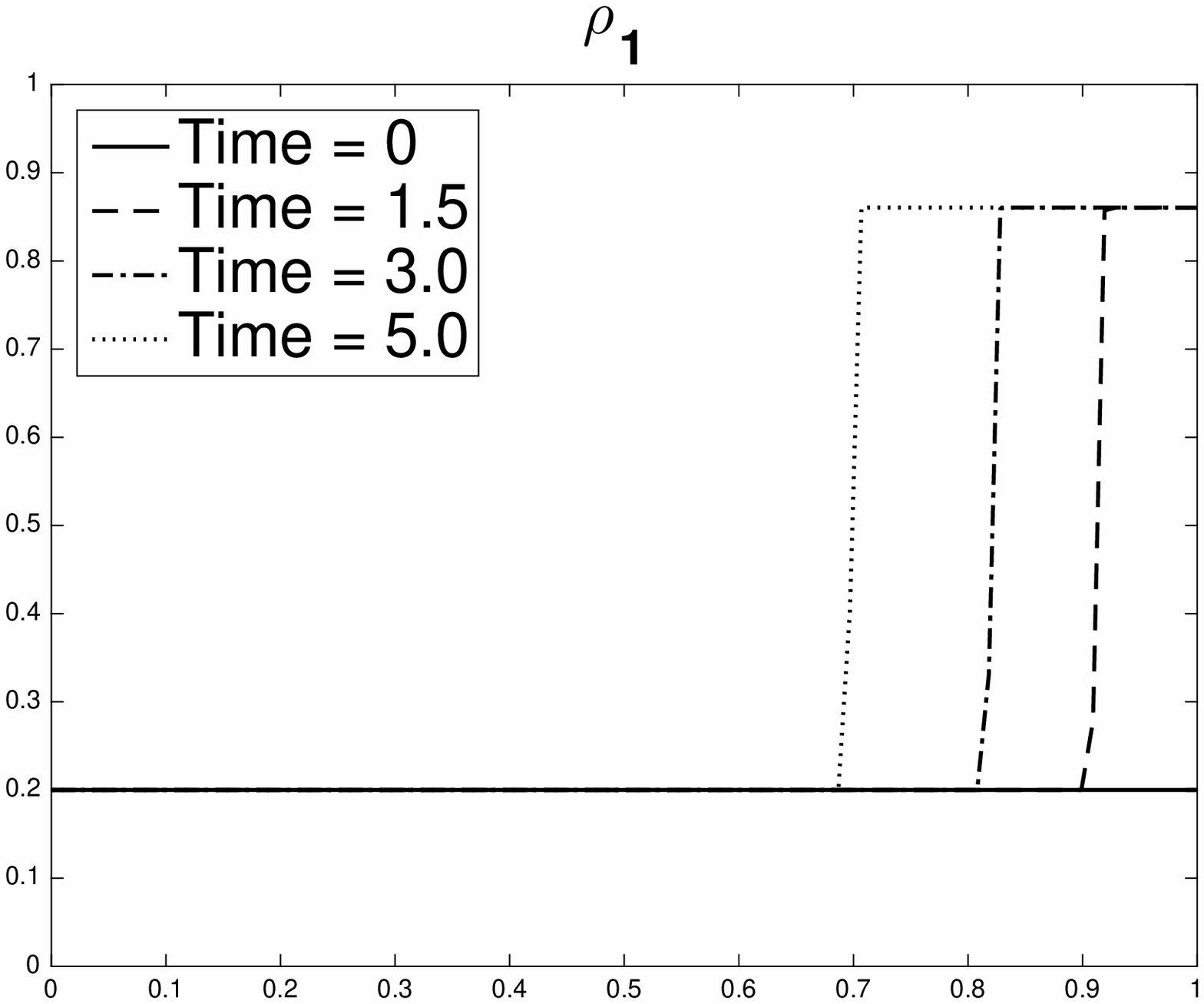}
		\caption{Road 1}
	\end{subfigure}\\
	\begin{subfigure}[H]{\textwidth}
		\centering
		\includegraphics[width=0.31\textwidth]{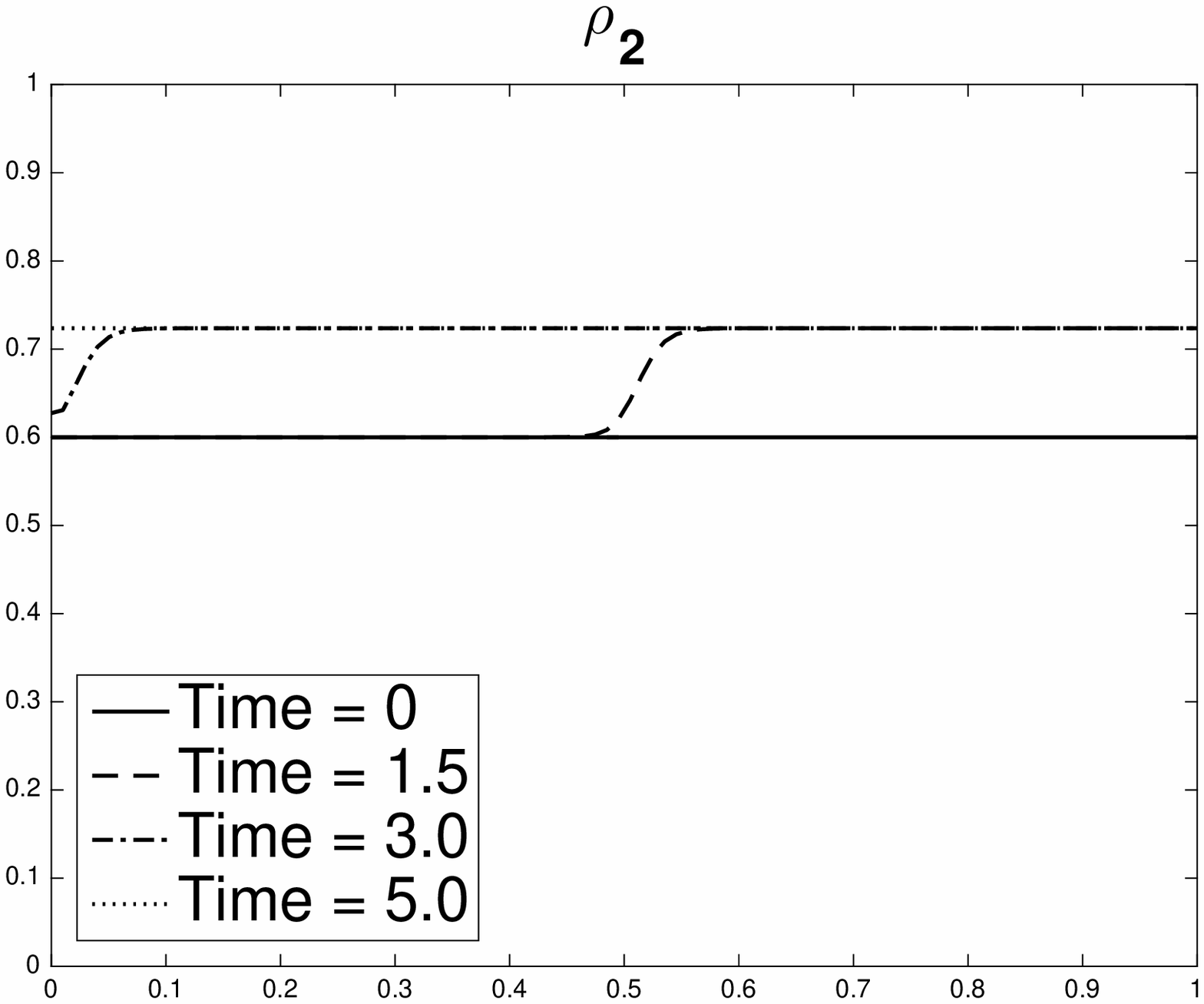}
		\hskip 1cm
		\includegraphics[width=0.31\textwidth]{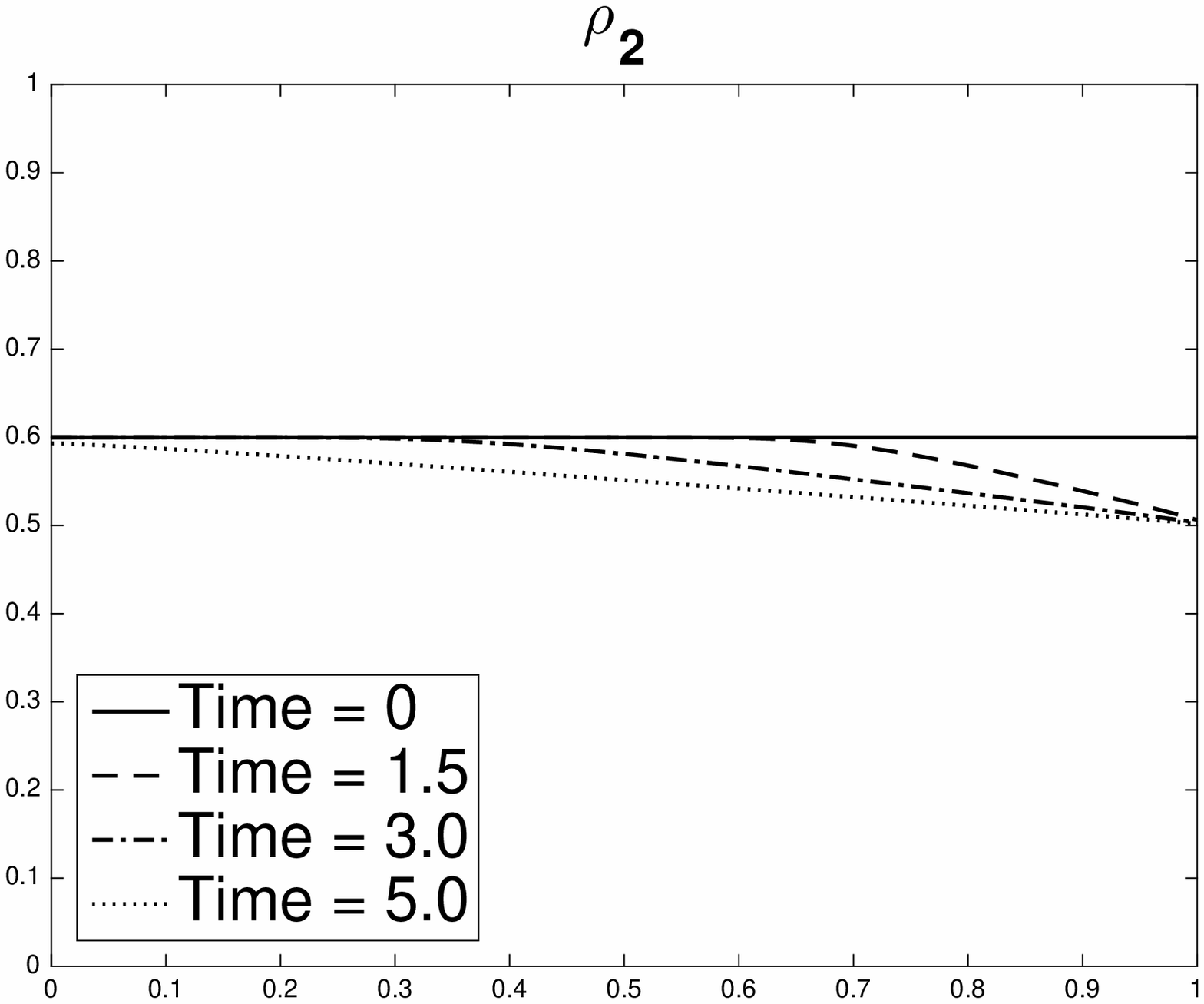}
		\caption{Road 2}
	\end{subfigure}
	\begin{subfigure}[H]{\textwidth}
		\centering
		\includegraphics[width=0.31\textwidth]{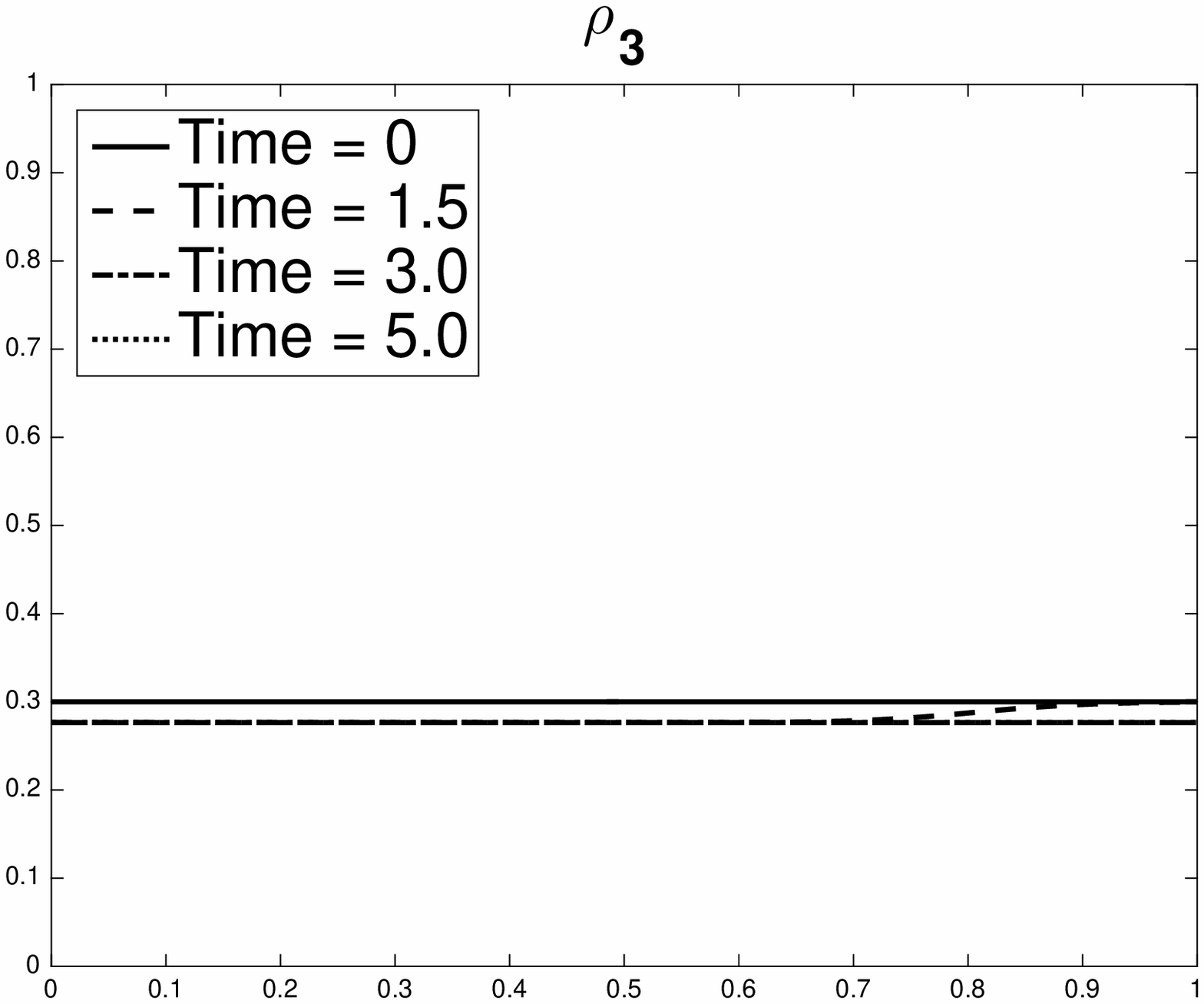}
		\hskip 1cm
		\includegraphics[width=0.31\textwidth]{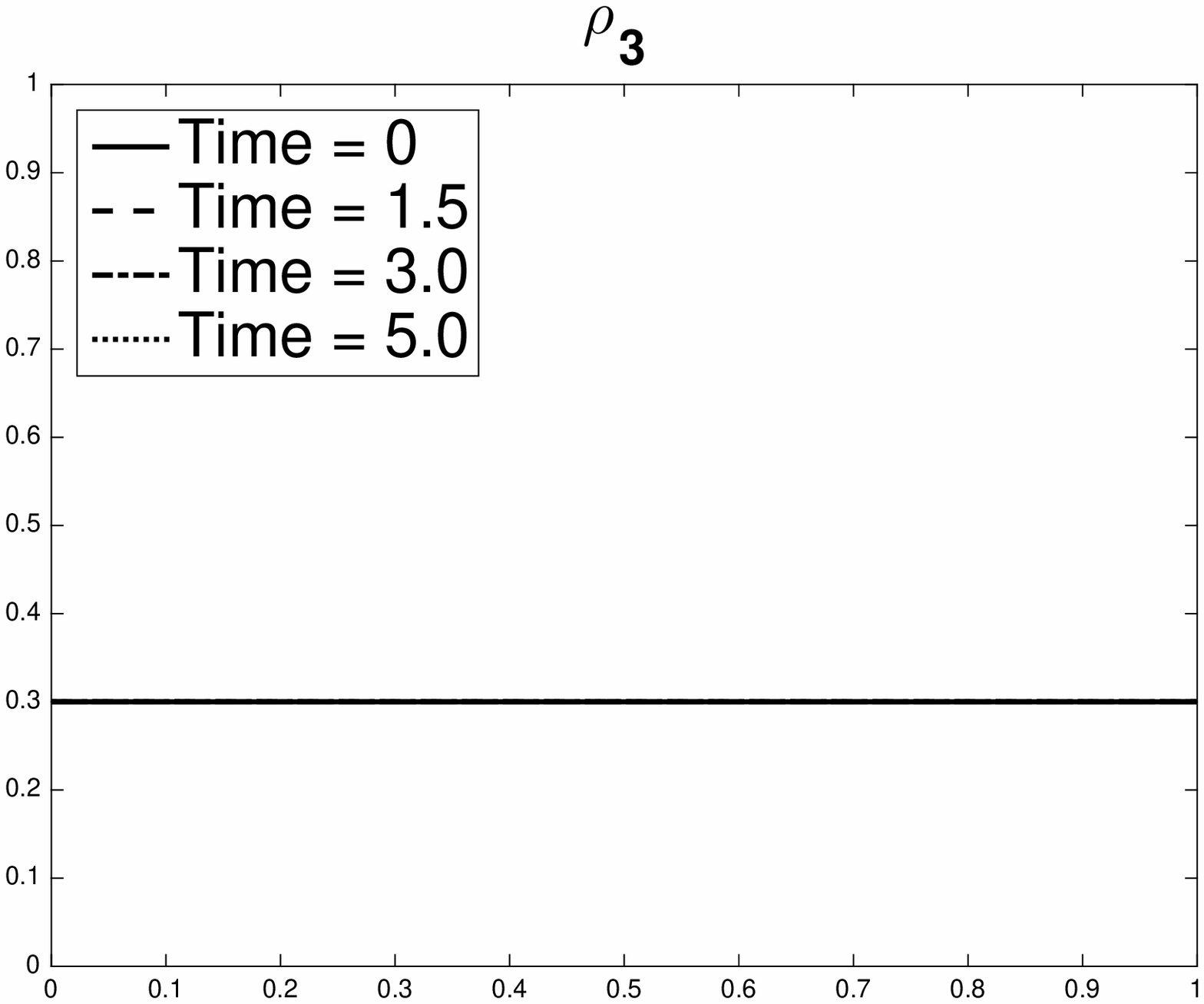}
		\caption{Road 3}
	\end{subfigure}
	\begin{subfigure}[H]{\textwidth}
		\centering
		\includegraphics[width=0.31\textwidth]{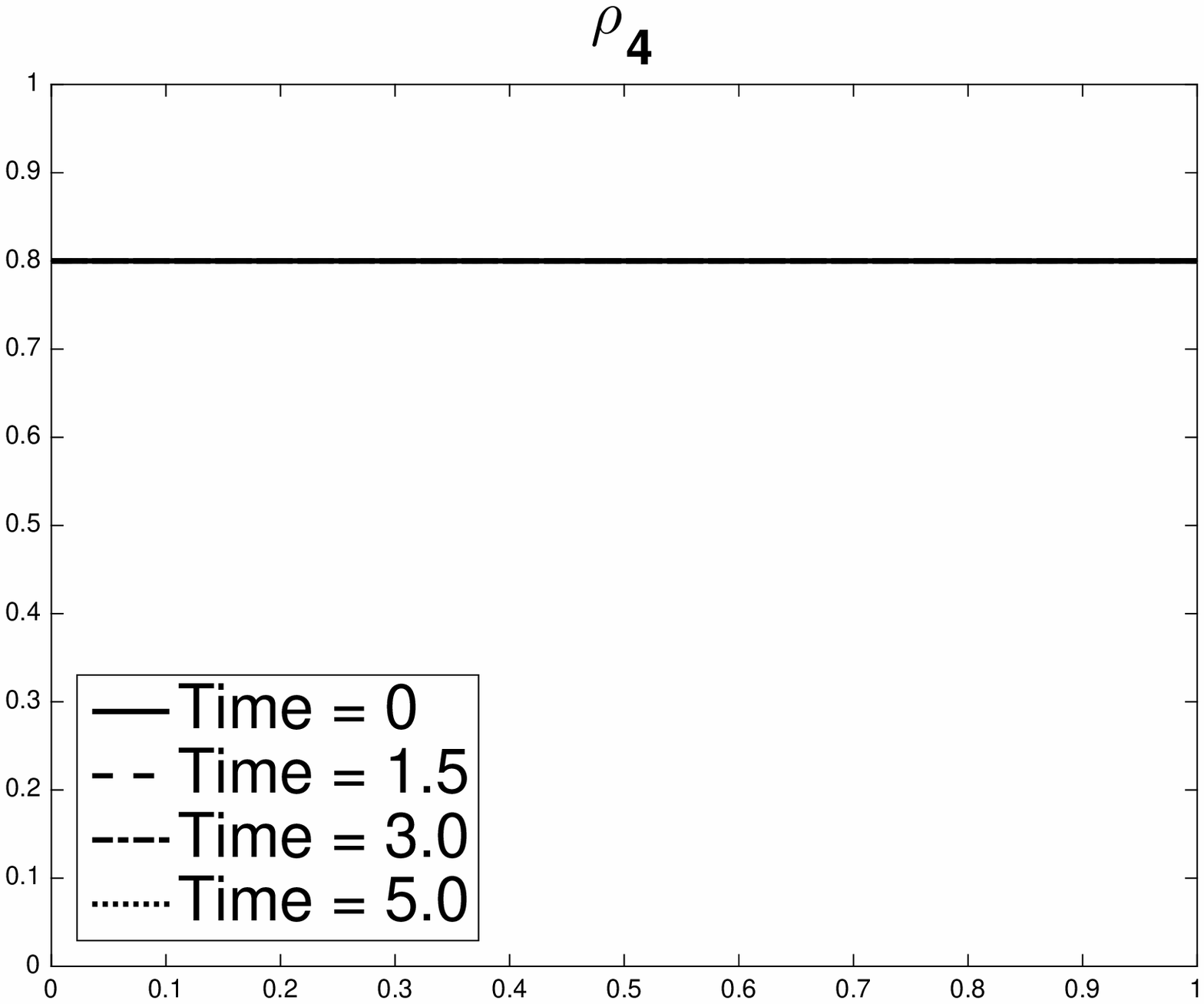}
		\hskip 1cm
		\includegraphics[width=0.31\textwidth]{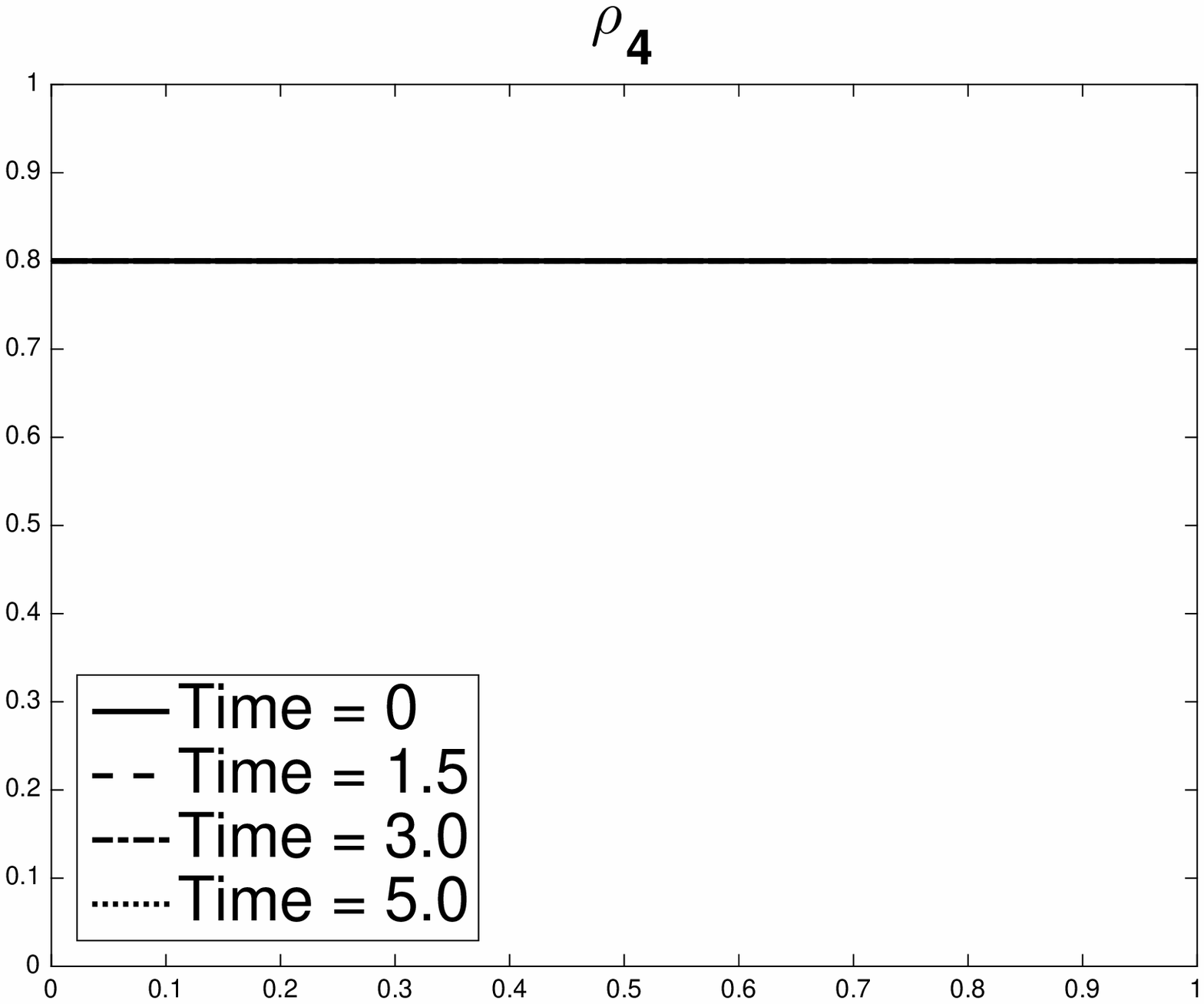}
		\caption{Road 4}
	\end{subfigure}
\caption{Case II : Solution of the problem using $\PRS$ on the left and $\Rsol_{\mathrm{CGP}}$ on the right.}
\label{fig:caseII}
\end{figure}

The simulations (see Figures \ref{fig:caseII}) show clearly the different solutions of the Riemann solvers. In particular,  $\Rsol_{\mathrm{CGP}}$ creates a big shock in the incoming road $1$ decreasing  its flux. This wave does not appear in our Riemann solver $\PRS$.
\FloatBarrier
\item \textbf{Case III:} $3\times 2$ junction.

We fix the matrix $A$ and the priority vector $P$ as follows:
\begin{equation}
\label{eq:CaseIII_MatA}
A= \begin{bmatrix}
0.5 & 0.6 & 0.2\\
0.5 & 0.4 & 0.8\\
\end{bmatrix} \quad
P = \begin{bmatrix}
0.5 & 0.3 & 0.2
\end{bmatrix}.
\end{equation}
We consider the following initial data:
\begin{equation}
\label{eq:CaseIII_InData}
\rho_{1,0} = 0.2, \qquad \rho_{2,0} = 0.6, \qquad \rho_{3,0} = 0.3, \qquad \rho_{4,0} = 0.8, \qquad \rho_{5,0} = 0.2. 
\end{equation}
\begin{figure}[H]
	\centering
	\begin{subfigure}[H]{0.3\textwidth}
		\centering
		\includegraphics[width=\textwidth]{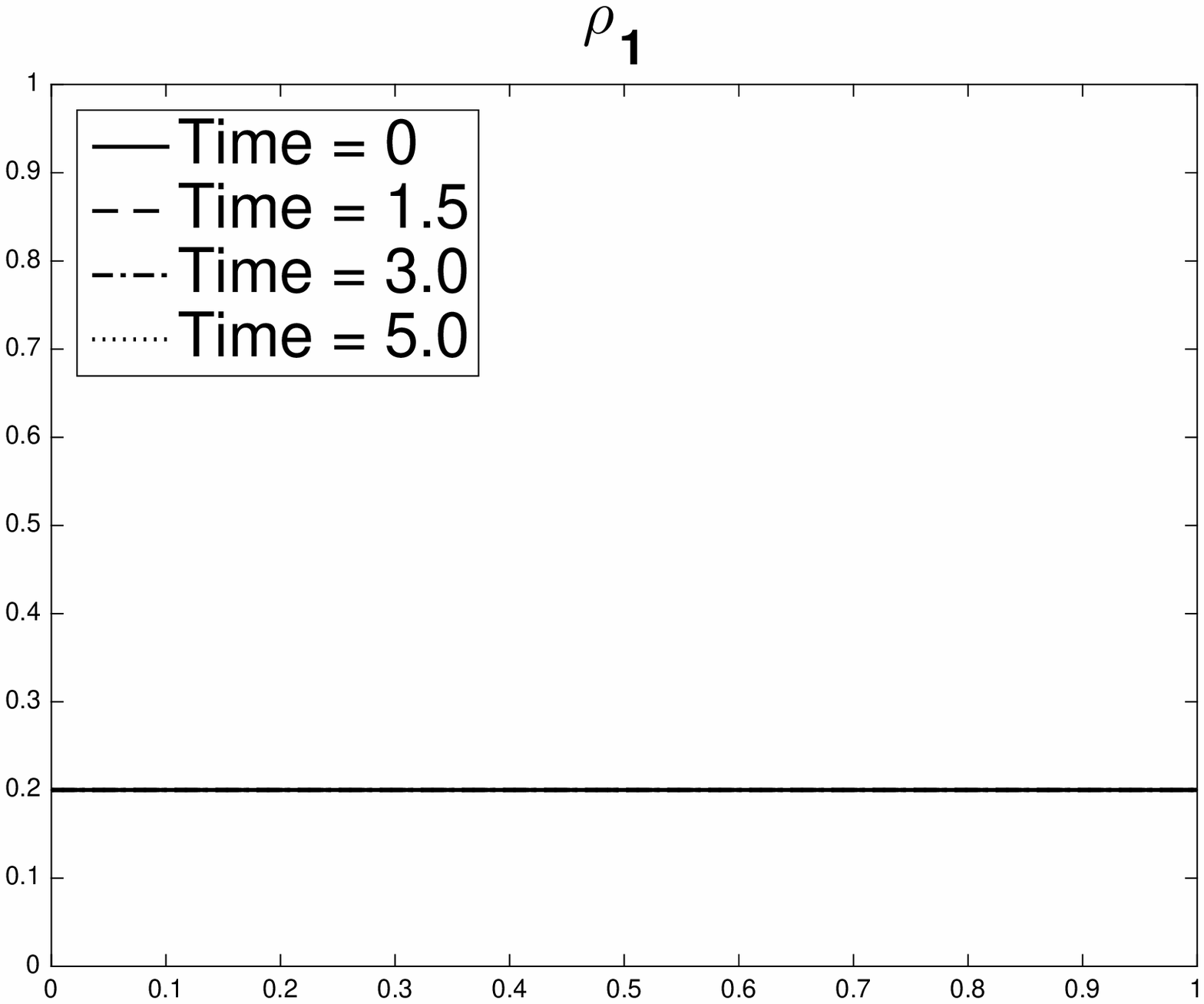}
		\caption{Road 1}
	\end{subfigure}
	\hskip 1cm
	\begin{subfigure}[H]{0.3\textwidth}
		\centering
		\includegraphics[width=\textwidth]{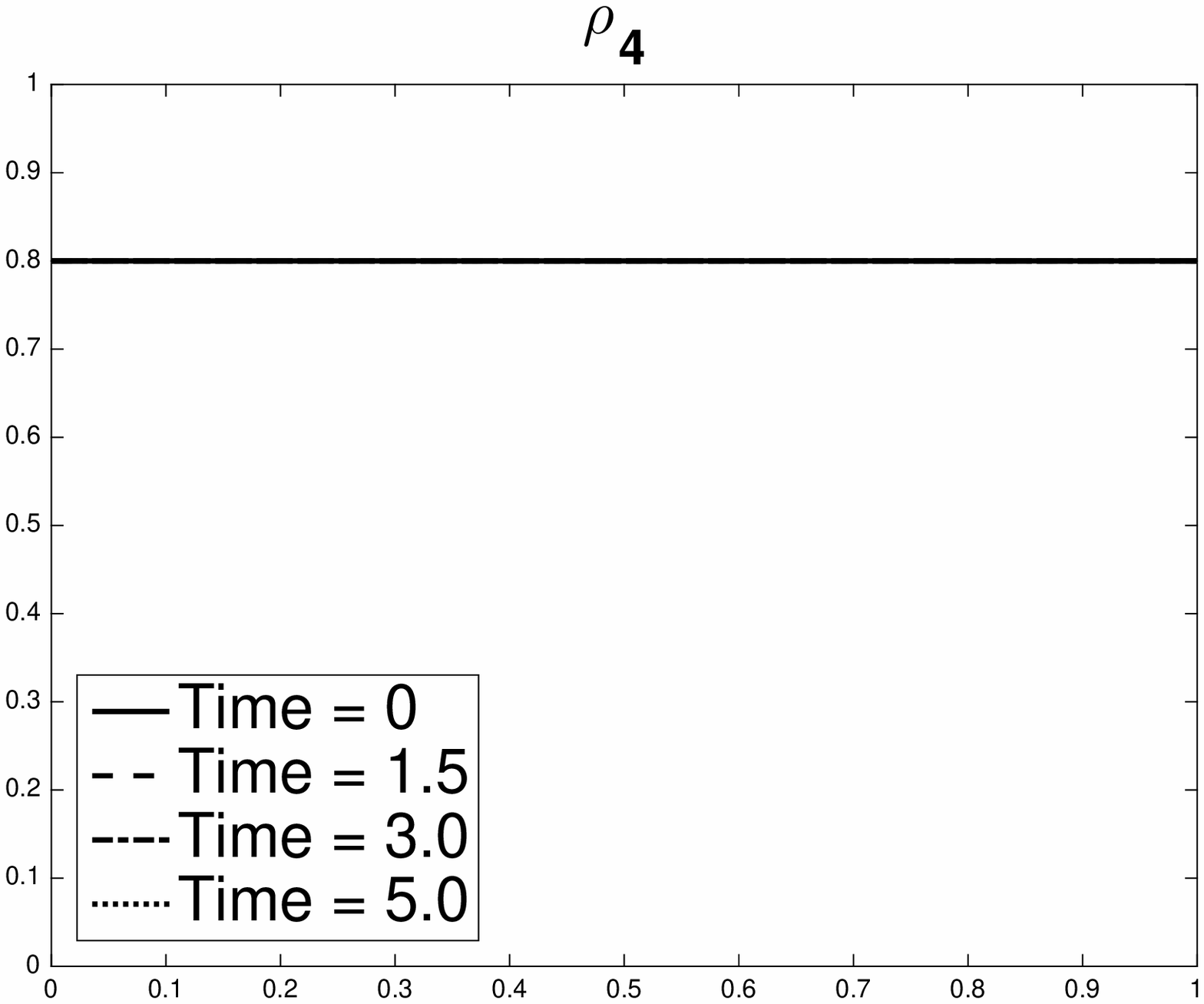}
		\caption{Road 4}
	\end{subfigure}\\
	\begin{subfigure}[H]{0.3\textwidth}
		\centering
		\includegraphics[width=\textwidth]{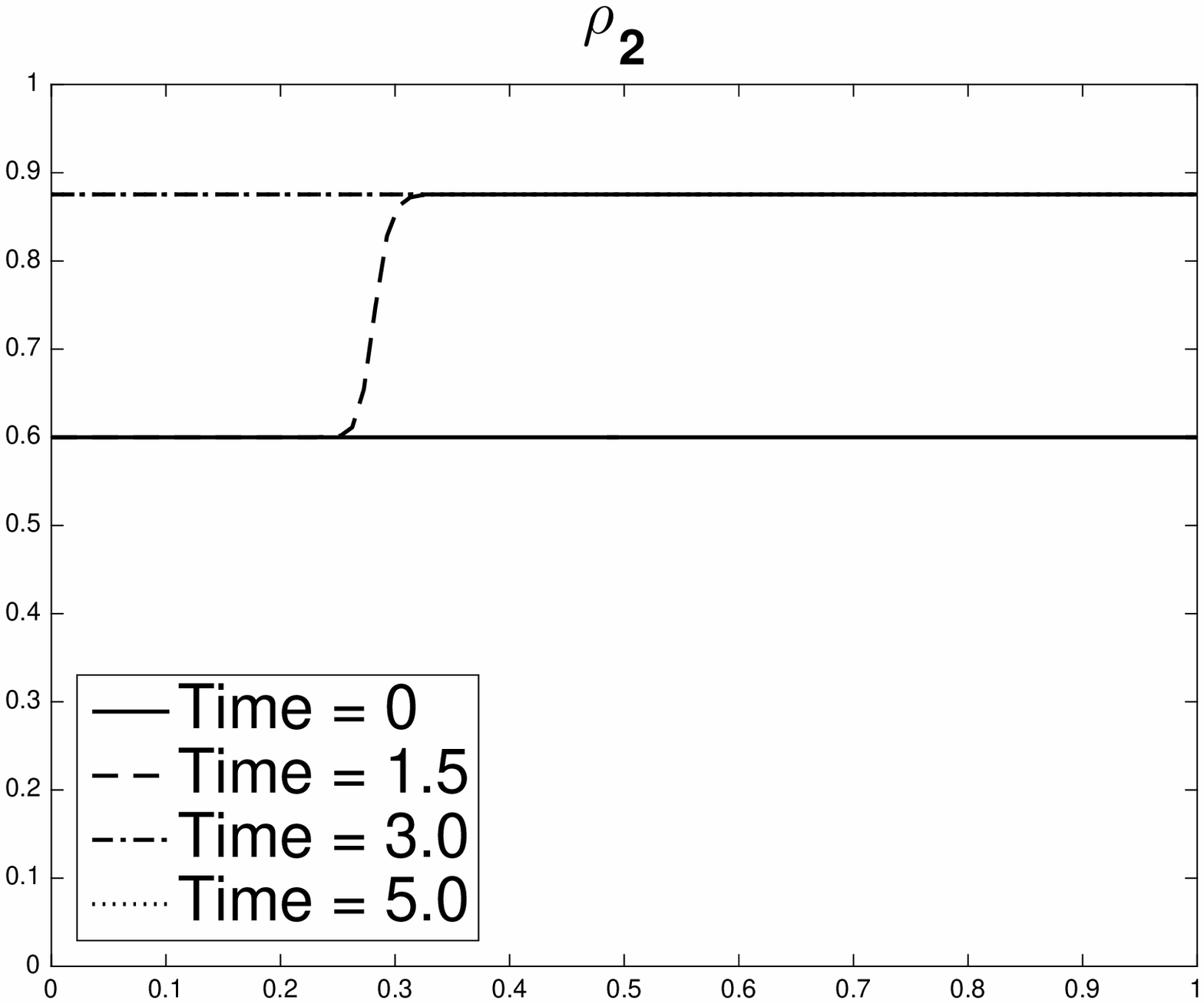}
		\caption{Road 2}
	\end{subfigure}
	\hskip 1cm
	\begin{subfigure}[H]{0.3\textwidth}
		\centering
		\includegraphics[width=\textwidth]{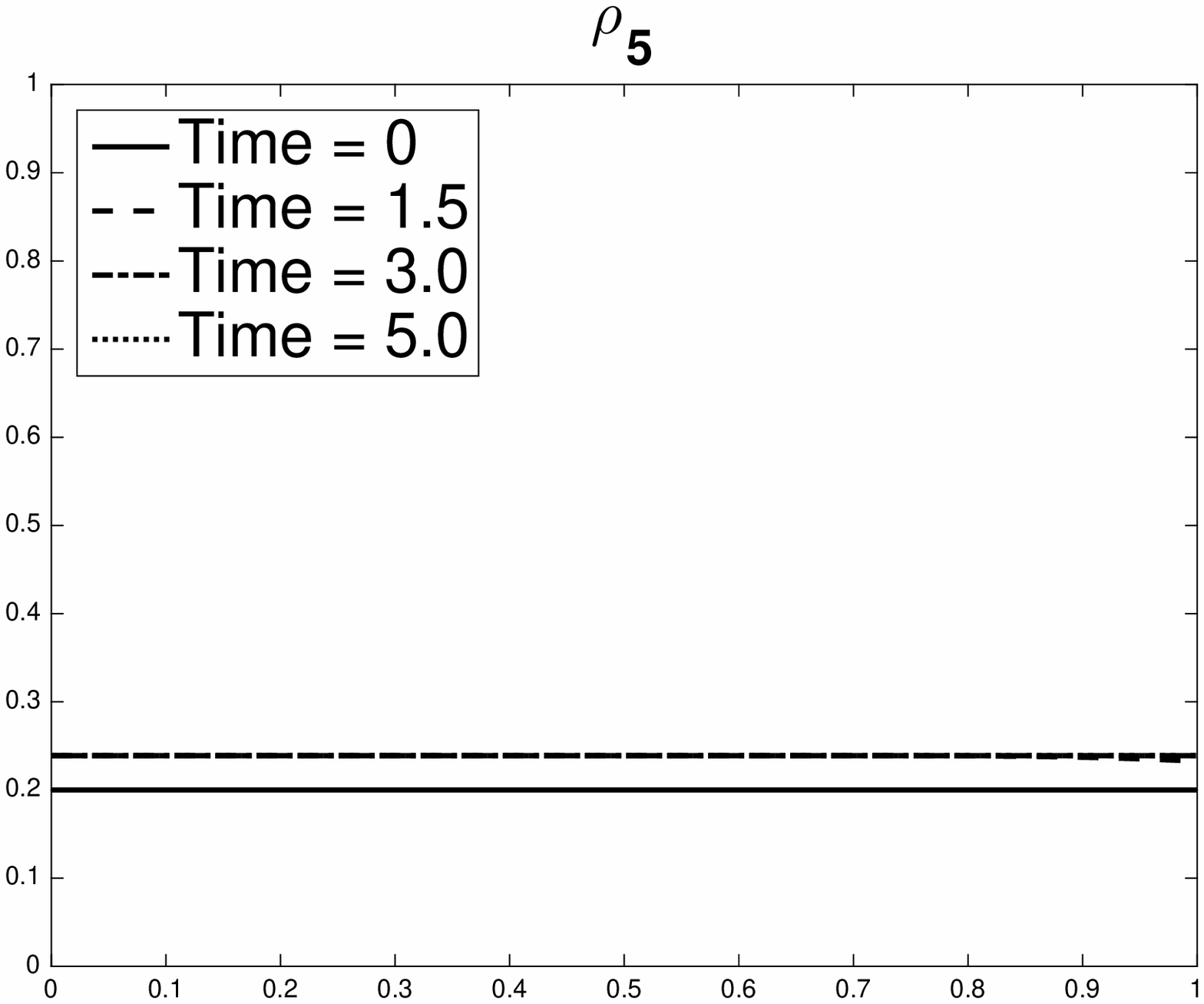}
		\caption{Road 5}
	\end{subfigure}\\
	\begin{subfigure}[H]{0.3\textwidth}
		\flushleft
		\includegraphics[width=\textwidth]{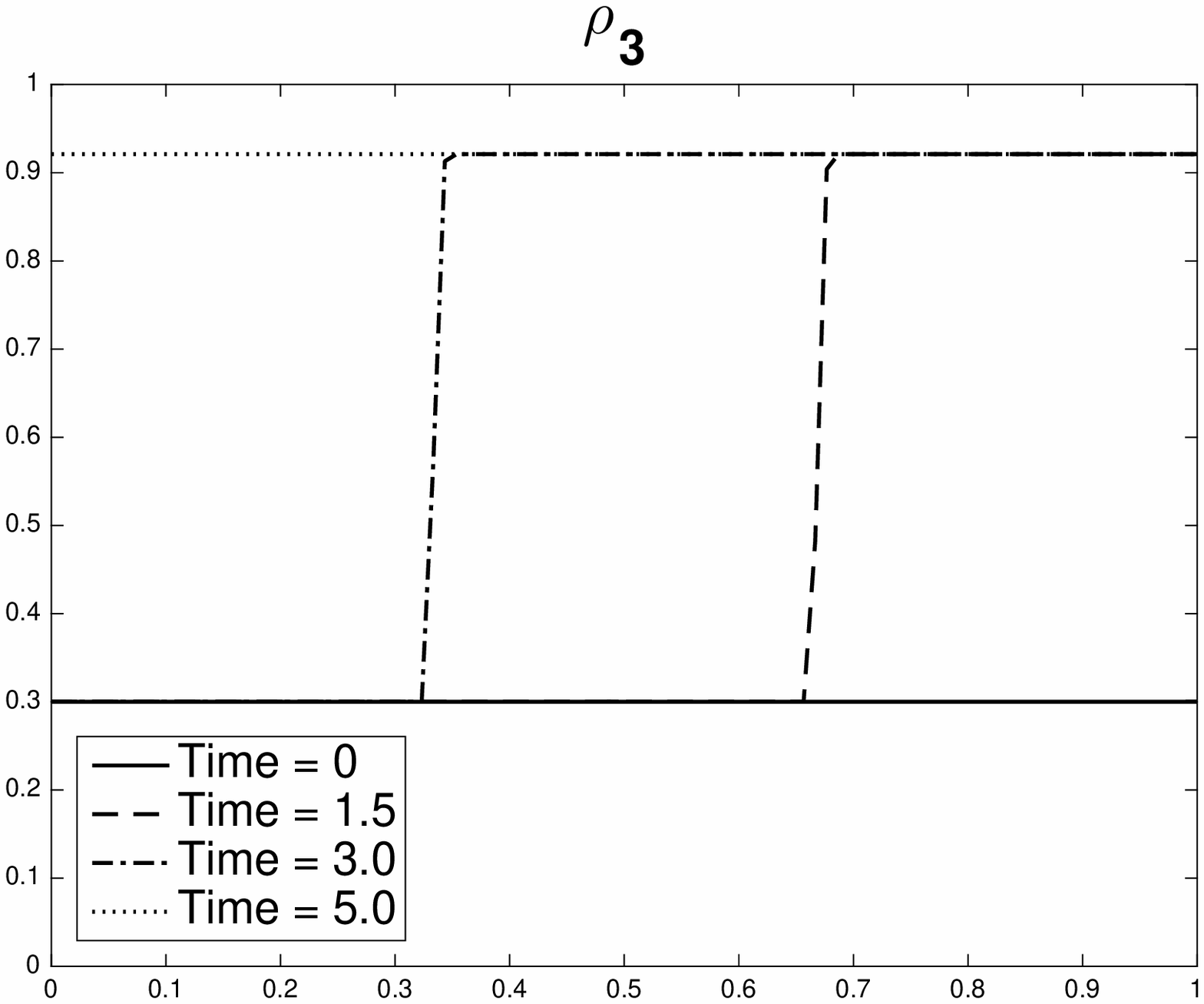}
		\caption{Road 3}
	\end{subfigure}
\caption{Case III : Solution of the problem using $\PRS$}
\label{fig:caseIII}
\end{figure}
Due to the lower priorities given to roads $2$ and $3$ we can see that queues are created in the two incoming roads, see Figure \ref{fig:caseIII}.  Note also that this case cannot be handled by $\Rsol_{\mathrm{CGP}}$ since $n>m$.
\end{enumerate}

\FloatBarrier

\section{Appendix: Proof of Proposition \ref{prop:P123}}
\label{sec:appendix}
The construction of $\PRS$ depends only on the matrix $A$,
the priority vector $P$ and the sets $\Omega_l$.
The latter, in turn, depends only on bad data,
thus property (P1) holds true.

We prove (P2) and (P3) for the case $n=m=2$ 
and distinguish between three different generic situations for the initial equilibrium:
demand constrained, demand/supply constrained and supply constrained
(where demand indicates flow from incoming road and supply flow to outgoing ones).
In the first situation the incoming roads act as constraint in the definition of
the set $\Omega$ (see (\ref{eq:omega})) and the equilibrium corresponds to the point $Q_0$ as
in Figure \ref{fig:CaseA1_increasing}. The second case corresponds to one incoming and one
outgoing road acting as constraint and to the point $Q_0$ as in Figure \ref{fig:CaseB1_increasing}.
Finally, the third case corresponds to outgoing roads acting as constraint and
to the point $Q_0$ as in Figure \ref{fig:CaseC3_increasing}.

\begin{itemize}
\item {\bf Case A: Demand constrained.} By symmetries, it is not restrictive to assume that the priority line $hP$, $h>0$,
intersects the constraint $\gamma_2^\mathrm{inc}=\gamma_{2,0}$. We have to distinguish several subcases: \\[10pt]
{\bf Case A1:} The incoming wave is $(\rho_1,\rho_{1,0})$ (on road 1). Since $\gamma_{1,0}$ is an active constraint,
$\rho_{1,0}\leq \rhocr$ and $\rho_1\leq \rhocr$. 
We distinguish the two situations:\\[10pt]
$-$ If $f(\rho_1)>f(\rho_{1,0})$ we define $\gamma_{1,1}=\dfrac{\gamma_3-a_{32}\gamma_2}{a_{31}}$
and $\gamma_{1,0}\leq\gamma_{1,1}\leq\gamma_1$
(see Figure \ref{fig:CaseA1_increasing}).
We get:
\begin{align*}
\TV(f)^+ &= \abs{\gamma_{1} - \gamma_{1,1}} + a_{31}\abs{\gamma_{1,1}-\gamma_{1,0}} + a_{41}\abs{\gamma_{1,1}-\gamma_{1,0}}, \\
\Delta \TV (f) &= (a_{31}+a_{41}-1)\abs{\gamma_{1,1} - \gamma_{1,0}},\\
\Delta \Gamma &= \gamma_{1,1} - \gamma_{1,0}>0,\\
\Delta \bar{h} &= 0.
\end{align*}
Hence, (P2) holds and (P3) doesn't need to be verified. \\[10pt]
$-$ If $f(\rho_1)>f(\rho_{1,0})$, we define $\gamma_{1,1} = \dfrac{p_1}{p_2}\gamma_{2}$,
hence $\gamma_{1,0}\geq\gamma_{1,1}\geq\gamma_1$  (see Figure \ref{fig:CaseA1_decreasing} ).
We have:
\begin{align*}
\TV(f)^- &= \abs{\gamma_1 - \gamma_{1,0}},\\
\TV(f)^+ &= a_{31}\abs{\gamma_1 - \gamma_{1,0}}+ a_{41}\abs{\gamma_1 - \gamma_{1,0}},\\
\Delta \TV (f) &= (a_{31}+a_{41} -1)\abs{\gamma_1 - \gamma_{1,0}},\\
\Delta \Gamma &= (\gamma_1 - \gamma_{1,0})<0,\\
\Delta \bar{h} &= \dfrac{1}{p_1}(\gamma_1 - \gamma_{1,1})<0.
\end{align*}
Hence, (P2) and (P3) hold.
\begin{figure}[ht]
\begin{subfigure}[H]{0.3\textwidth}
	\centering
	\begin{tikzpicture}
\draw[<->](5,0)--(0,0)--(0,5);
\draw(0,0)rectangle(2,3);
\draw[red](0,0)--(1,3.5);
\draw(2,3)--(3,3);
\draw(3,0)--(3,3);
\draw(2,4)--(3,3)--(4.5,1.5);
\draw(4,0)--(4,2);
\node[left]at(0,3){$\gamma_2$};
\node[left]at(0,5){$\gamma^{\mathrm{inc}}_2$};
\node[right]at(2,4){$\gamma_3$};
\node[below]at(2,0){$\gamma_{1,0}$};
\node[below]at(3,0){$\gamma_{1,1}$};
\node[below]at(4,0){$\gamma_{1}$};
\node[below]at(5.2,0.15){$\gamma^{\mathrm{inc}}_{1}$};
\node[above]at(2,3){$Q_{0}$};
\node[right]at(3,3){$Q_{1}$};
\end{tikzpicture}
	\caption{Increasing $\gamma_1$}
	\label{fig:CaseA1_increasing}
\end{subfigure}\hspace{2cm} 
\begin{subfigure}[H]{0.3\textwidth}
	\centering
	\begin{tikzpicture}
\draw[<->](5,0)--(0,0)--(0,5);
\draw(0,0)rectangle(2,3);
\draw[red](0,0)--(2.3,3.5);
\draw(2,3)--(3,3);
\draw(3,0)--(3,3);
\draw(1,0)--(1,3);
\node[left]at(0,3){$\gamma_2$};
\node[below]at(2,0){$\gamma_{1,1}$};
\node[below]at(3,0){$\gamma_{1,0}$};
\node[below]at(1,0){$\gamma_{1}$};
\node[below]at(5.2,0.15){$\gamma^{\mathrm{inc}}_{1}$};
\node[left]at(0,5){$\gamma^{\mathrm{inc}}_2$};
\node[above]at(3,3){$Q_0$};
\node[above]at(1,3){$Q_1$};
\end{tikzpicture}
	\caption{Decreasing $\gamma_1$}
	\label{fig:CaseA1_decreasing}
\end{subfigure}
\caption{Case A1}
\label{fig:CaseA1}
\end{figure}

{\bf Case A2:} The incoming wave is $(\rho_2,\rho_{2,0})$ (on road 2). Since $\gamma_{2,0}$ is an active constraint,
$\rho_{2,0}\leq \rhocr$ and $\rho_2\leq \rhocr$. \\[10pt]
$-$If $f(\rho_2)>f(\rho_{2,0})$, we define
\[
\gamma_{2,1}=\dfrac{\gamma_3 - a_{31\gamma_1}}{a_{32}}
\quad\hbox{and}\quad
\gamma_{2,2} = \dfrac{p_2}{a_{31}p_1+a_{32}p_2}\, \gamma_3^{\max}
\]
so that $\gamma_{2,0}\leq\gamma_{2,1}\leq\gamma_{2,2}\leq\gamma_2$,
see Figure \ref{fig:CaseA2_increasing}. 
Note that this case is the same as in A1 except for the case in the drawing. In this case we have:
\begin{align*}
\TV(f)^- =&~ \abs{\gamma_2 - \gamma_{2,0}} = \abs{\gamma_{2}-\gamma_{2,2}}+\abs{\gamma_{2,2}-\gamma_{2,1}}+\abs{\gamma_{2,1}-\gamma_{2,0}},\\
\TV(f)^+ =&~ \dfrac{a_{32}}{a_{31}}\abs{\gamma_{2,2}-\gamma_{2,1}}+ \abs{\gamma_2 - \gamma_{2,2}}+ a_{32}\abs{\gamma_{2,1}- \gamma_{2,0}} \\
&+ \abs{a_{42} \abs{\gamma_{2,1} - \gamma_{2,0}}+\left(a_{42}-a_{41}\dfrac{a_{32}}{a_{31}}\right)\abs{\gamma_{2,2}-\gamma_{2,1}}},\\
\Delta \TV(f) =&~ \left( \dfrac{a_{32}}{a_{31}}-1 \right)\abs{\gamma_{2,2} - \gamma_{2,1}}+ (a_{32}-1)\abs{\gamma_{2,1}- \gamma_{2,0}} \\
&+ \abs{a_{42} \abs{\gamma_{2,1} - \gamma_{2,0}}+\left(a_{42}-a_{41}\dfrac{a_{32}}{a_{31}}\right)\abs{\gamma_{2,2}-\gamma_{2,1}}},\\
\Delta \Gamma =&~ (\gamma_{2,1} - \gamma_{2,0})+ \left(1- \dfrac{a_{32}}{a_{31}}\right)(\gamma_{2,2} - \gamma_{2,1}),\\
\Delta \bar{h} =&~ \dfrac{1}{p_2}(\gamma_{2,2} - \gamma_{2,0})
\end{align*}
Hence, (P2) holds while (P3) doesn't need to be checked.\\[10pt]
$-$ If $f(\rho_2)<f(\rho_{2,0})$ one has $\gamma_{2} < \gamma_{2,0}$, see Figure \ref{fig:CaseA2_decreasing}.
Therefore:
\begin{align*}
\TV(f)^- &= \abs{\gamma_2 - \gamma_{2,0}},\\
\TV(f)^+ &= (a_{32}+a_{42})\abs{\gamma_2 - \gamma_{2,0}},\\
\Delta \TV (f) &= 0 \quad\hbox{(recall that $a_{32}+a_{42}=1$)},\\
\Delta \Gamma &= (\gamma_2- \gamma_{2,0})<0,\\
\Delta \bar{h} &= \dfrac{1}{p_2}(\gamma_2 - \gamma_{2,0})<0.
\end{align*}
Hence, (P2) and (P3) hold. \\
\begin{figure}[ht]
\begin{subfigure}[H]{0.3\textwidth}
	\centering
	\begin{tikzpicture}
\draw[<->](5,0)--(0,0)--(0,5);
\draw(0,0)rectangle(3,2);
\draw[red](0,0)--(2.5,4);
\draw(0,3)--(3,3);
\draw(3,2)--(3,3);
\draw(1,4.3)--(1.5,4)--(3,3)--(4,2.2);
\draw(2.2,3.5)--(0,3.5);
\draw(1.5,4)--(0,4);
\node[left]at(0,3){$\gamma_{2,1}$};
\node[left]at(0,2){$\gamma_{2,0}$};
\node[left]at(0,3.5){$\gamma_{2,2}$};
\node[left]at(0,4){$\gamma_{2}$};
\node[below]at(5.2,0.15){$\gamma^{\mathrm{inc}}_{1}$};
\node[left]at(0,5){$\gamma^{\mathrm{inc}}_2$};
\node[right]at(3,2){$Q_0$};
\node[above]at(2.1,3.7){$Q_1$};
\node[right]at(4,2.2){$\gamma_{3}$};
\end{tikzpicture}
	\caption{Increasing $\gamma_2$}
	\label{fig:CaseA2_increasing}
\end{subfigure}\hspace{2cm} 
\begin{subfigure}[H]{0.3\textwidth}
	\centering
	\begin{tikzpicture}
\draw[<->](5,0)--(0,0)--(0,5);
\draw(0,0)rectangle(3,2);
\draw[red](0,0)--(2.5,4);
\draw(0,1)--(3,1);
\node[left]at(0,2){$\gamma_{2,0}$};
\node[left]at(0,1){$\gamma_{2}$};
\node[below]at(5.2,0.15){$\gamma^{\mathrm{inc}}_{1}$};
\node[left]at(0,5){$\gamma^{\mathrm{inc}}_2$};
\node[right]at(3,2){$Q_0$};
\node[right]at(3,1){$Q_1$};
\end{tikzpicture}
	\caption{Decreasing $\gamma_12$}
	\label{fig:CaseA2_decreasing}
\end{subfigure}
\caption{Case A2}
\label{fig:CaseA2}
\end{figure}
{\bf Case A3:} The incoming wave is $(\rho_{3,0},\rho_{3})$ (on road 3, the case of road 4 being similar). \\[10pt]
$-$ If $f(\rho_3) < f(\rho_{3,0})$ We define $\gamma_{3,1} = \left(a_{31}\dfrac{p_1}{p_2} +a_{32}\right) \gamma_{2}$ so that $\gamma_{3}\leq \gamma_{3,1}\leq \gamma_{3,0}$ (see Figure \ref{fig:CaseA3_decreasing}):
\begin{align*}
\TV(f)^- &= \abs{\gamma_{3} - \gamma_{3,0}},\\
\TV(f)^+ &= \dfrac{a_{41}+1}{a_{31}}\abs{\gamma_{3,1}-\gamma_{3,0}}
+ \dfrac{(a_{41}+1)p_1 + (a_{42}+1)p_2}{a_{31}p_1 + a_{32}p_2}  \abs{\gamma_3 - \gamma_{3,1}},\\
\Delta \TV(f) &= \dfrac{a_{41}}{a_{31}} \abs{\gamma_{3,1}-\gamma_{3,0}}+ 
\left(\dfrac{(a_{41}+1)p_1 + (a_{42}+1)p_2}{a_{31}p_1 + a_{32}p_2}  -1\right) \abs{\gamma_3 - \gamma_{3,1}},\\
\Delta \Gamma &= \dfrac{1}{a_{31}}(\gamma_{3,1} - \gamma_{3,0})+ \dfrac{a_{41}p_1 + a_{42}p_2}{a_{31}p_1 + a_{32}p_2} (\gamma_3 - \gamma_{3,1})<0,\\
\Delta \bar{h} &= \dfrac{1}{a_{31}p_1+a_{32}p_2}(\gamma_3 - \gamma_{3,1})<0. 
\end{align*}
Hence, (P2) and (P3) hold. \\[10pt]
\begin{figure}[ht]
\centering
\begin{tikzpicture}
\draw[<->](6,0)--(0,0)--(0,6);
\draw(0,0)rectangle(2,3);
\draw[red](0,0)--(2,3)--(2.8,4);
\draw(2,0)--(0,3);
\draw(4,0)--(2,3)--(0.3,5.5);
\draw(5,0)--(3,3)--(1.7,5);
\draw(2,3)--(3,3)--(3,0);
\node[right]at(3,3){$\gamma_{3,0}$};
\node[above]at(4,0.5){$\gamma_{3,1}$};
\node[above]at(1.1,0.5){$\gamma_{3}$};
\node[below]at(6.2,0.15){$\gamma^{\mathrm{inc}}_{1}$};
\node[left]at(0,6){$\gamma^{\mathrm{inc}}_2$};
\node[above]at(1,1.65){$Q_1$};
\node[left]at(3,2.65){$Q_0$};
\end{tikzpicture}
\caption{Case a3 -  Decreasing $\gamma_3$}
\label{fig:CaseA3_decreasing}
\end{figure}
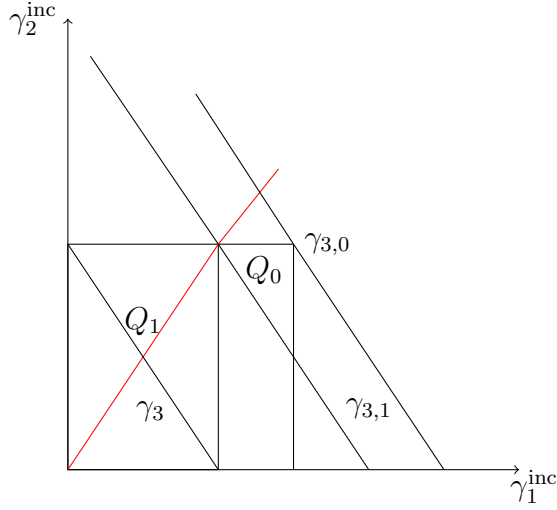
$-$ If $f(\rho_3) > f(\rho_{3,0})$ and $\gamma_{3} > \gamma_{3,0}$ then we stay demand constrained and nothing happens. 
%
\item {\bf Case B: Supply constrained, priority line intersects a demand constraint.} 
Even in this case, it is not restrictive to assume that the priority line $hP$, $h>0$,
intersects the constraint $\gamma_1^\mathrm{inc}=\gamma_{1,0}$. We spit the proof in  several subcases
depending on the origin of the incoming wave: \\[10pt]
{\bf Case B1:} The incoming wave is $(\rho_1,\rho_{1,0})$ (on road 1). 
We define $\gamma_{1,1}=\dfrac{a_{42}-a_{32}}{a_{31}a_{42} - a_{32}a_{41}}\ \gamma_1$
(see Figure \ref{fig:CaseB1_increasing} and \ref{fig:CaseB1_decreasing}). We distinguish the two situations:\\[10pt]
$-$ If $f(\rho_1)>f(\rho_{1,0})$ we get $\gamma_{1,0}\leq \gamma_{1,1}\leq \gamma_{1,2}\leq \gamma_1$,
where $\gamma_{1,2}=\dfrac{p_1}{a_{41}p_1 + a_{42}p_2}\,\gamma_4$
(see Figure \ref{fig:CaseB1_increasing}). Then we have:
\begin{align*}
\TV(f)^- =&~ \abs{\gamma_1 - \gamma_{1,0}},\\
\TV(f)^+ =&~ \abs{\gamma_1 - \gamma_{1,2}} 
+\left( \dfrac{a_{41}}{a_{42}} +\abs{a_{31}- a_{32}\dfrac{a_{41}}{a_{42}}} \right)\abs{\gamma_{1,2}-\gamma_{1,1}} \\
&+\left( \dfrac{a_{31}}{a_{32}} + \abs{a_{41}- a_{42}\dfrac{a_{31}}{a_{32}}} \right)\abs{\gamma_{1,1}-\gamma_{1,0}},\\
\Delta \TV(f) =&~ \left( \dfrac{a_{41}}{a_{42}} +\abs{a_{31}- a_{32}\dfrac{a_{41}}{a_{42}}} -1\right)\abs{\gamma_{1,2}-\gamma_{1,1}}\\
&+\left( \dfrac{a_{31}}{a_{32}} + \abs{a_{41}- a_{42}\dfrac{a_{31}}{a_{32}}} -1\right)\abs{\gamma_{1,1}-\gamma_{1,0}},\\
\Delta \Gamma =&~ \left(1 - \dfrac{a_{31}}{a_{32}}\right) (\gamma_{1,1}-\gamma_{1,0})+ \left(1-\dfrac{a_{41}}{a_{42}}\right)(\gamma_{1,2}-\gamma_{1,1}),\\
\Delta \bar{h} =&~ \dfrac{1}{p_1} (\gamma_{1,2} - \gamma_{1,0}).
\end{align*}
Hence, (P2) holds and (P3) doesn't need to be checked. \\[10pt]
$-$ If $f(\rho_1) < f(\rho_{1,0})$ and $\gamma_{1,0}\geq \gamma_{1,1}\geq \gamma_{1,2}\geq \gamma_1$
(see Figure \ref{fig:CaseB1_decreasing}), we define $\gamma_{1,2} = \dfrac{\gamma_3-a_{32}\gamma_2}{a_{31}}$ and we get:
\begin{align*}
\TV(f)^- =&~ \abs{\gamma_1 - \gamma_{1,0}},\\
\TV(f)^+ =&~ (a_{31} +a_{41})\abs{\gamma_1-\gamma_{1,2}}
+ \left( \dfrac{a_{31}}{a_{32}} + \abs{a_{41}-a_{42}\dfrac{a_{31}}{a_{32}}}\right) \abs{\gamma_{1,2} - \gamma_{1,1}} \\
&+ \left( \dfrac{a_{41}}{a_{42}}+\abs{a_{31}-a_{32}\dfrac{a_{41}}{a_{42}}}\right) \abs{\gamma_{1,1} - \gamma_{1,0}},\\
\Delta \TV(f) = &
 \left( \dfrac{a_{31}}{a_{32}} + \abs{a_{41}-a_{42}\dfrac{a_{31}}{a_{32}}} -1\right) \abs{\gamma_{1,2} - \gamma_{1,1}} \\
&+ \left( \dfrac{a_{41}}{a_{42}}+\abs{a_{31}-a_{32}\dfrac{a_{41}}{a_{42}}} -1\right) \abs{\gamma_{1,1} - \gamma_{1,0}},\\
\Delta \Gamma =&~ (\gamma_1 - \gamma_{1,2})+ \left(1 - \dfrac{a_{31}}{a_{32}}\right)(\gamma_{1,2} - \gamma_{1,1})
+\left(1-\dfrac{a_{41}}{a_{42}}\right)(\gamma_{1,1} - \gamma_{1,0}),\\
\Delta \bar{h} =&~\dfrac{1}{p_1}(\gamma_1 - \gamma_{1,0})<0.
\end{align*}
Hence, (P2) and (P3) hold. \\[10pt]
\begin{figure}[ht]
\begin{subfigure}[H]{0.3\textwidth}
	\centering
	\begin{tikzpicture}
\draw[<->](6,0)--(0,0)--(0,6);
\draw(0,3)--(1,3);
\draw(2,0)--(2,2.5);
\draw[red](0,0)--(5,2);
\draw(5,1)--(0,3.5);
\draw(4,0)--(1.5,4);
\draw(2.65,0)--(2.65,2.2);
\draw(3.2,0)--(3.2,1.3);
\draw(3.8,0)--(3.8,0.3);
\node[below]at(1.6,0){$\gamma_{1,0}$};
\node[below]at(2.5,0){$\gamma_{1,1}$};
\node[below]at(3.2,0){$\gamma_{1,2}$};
\node[below]at(3.8,0){$\gamma_{1}$};
\node[below]at(0.3,4){$\gamma_{3}$};
\node[below]at(1.9,4){$\gamma_{4}$};
\node[below]at(6.2,0.15){$\gamma^{\mathrm{inc}}_{1}$};
\node[left]at(0,6){$\gamma^{\mathrm{inc}}_2$};
\node[above]at(2,2.45){$Q_0$};
\node[above]at(3.7,0.8){$Q_1$};
\end{tikzpicture}
	\caption{Increasing $\gamma_1$}
	\label{fig:CaseB1_increasing}
\end{subfigure}\hspace{3cm} 
\begin{subfigure}[H]{0.3\textwidth}
	\centering
	\begin{tikzpicture}
\draw[<->](6,0)--(0,0)--(0,6);
\draw(0,3)--(1.6,3);
\draw(1.6,0)--(1.6,3);
\draw[red](0,0)--(5,1.9);
\draw(5,2)--(0,3.5);
\draw(5,1)--(1,4);
\draw(2.8,0)--(2.8,2.6);
\draw(0.8,0)--(0.8,3);
\draw(3.5,0)--(3.5,2.1);
\node[below]at(3.5,0){$\gamma_{1,0}$};
\node[below]at(2.7,0){$\gamma_{1,1}$};
\node[below]at(1.6,0){$\gamma_{1,2}$};
\node[below]at(0.8,0){$\gamma_{1}$};
\node[above]at(1,4){$\gamma_{4}$};
\node[above]at(1,3.2){$\gamma_{3}$};
\node[below]at(6.2,0.15){$\gamma^{\mathrm{inc}}_{1}$};
\node[left]at(0,6){$\gamma^{\mathrm{inc}}_2$};
\node[above]at(3.2,1.7){$Q_0$};
\node[above]at(0.5,2.4){$Q_1$};
\end{tikzpicture}
	\caption{Decreasing $\gamma_1$}
	\label{fig:CaseB1_decreasing}
\end{subfigure}
\caption{Case B1}
\label{fig:CaseB1}
\end{figure}
{\bf Case B2:} The incoming wave is $(\rho_2,\rho_{2,0})$ (on road 2). \\[10pt]
$-$ If $f(\rho_2)>f(\rho_{2,0})$: nothing happens (see Figure \ref{fig:CaseB2_increasing}). \\[10pt]
$-$If $f(\rho_2)<f(\rho_{2,0})$ (see Figure \ref{fig:CaseB2_decreasing})
we define $\gamma_{2,1}=\dfrac{p_1}{p_2}\gamma_{1}$ and we compute:
\begin{align*}
\TV(f)^- =&~ \abs{\gamma_2 - \gamma_{2,0}},\\
\TV(f)^+ =&~ (a_{32} + a_{42})\abs{\gamma_2 - \gamma_{2,0}},\\
\Delta \TV(f) =&~  (a_{32}+a_{42}-1)\abs{\gamma_2 - \gamma_{2,0}}=0,\\
\Delta \Gamma =&~ (\gamma_2 - \gamma_{2,0})<0,\\
\Delta \bar{h} =&~ \dfrac{1}{p_2}(\gamma_2 - \gamma_{2,1})<0.
\end{align*}
Hence, (P2) and (P3) hold. \\[10pt]
\begin{figure}[ht]
\begin{subfigure}[H]{0.3\textwidth}
	\centering
\begin{tikzpicture}
\draw[<->](6,0)--(0,0)--(0,6);
\draw(0,3)--(1,3);
\draw(2,0)--(2,2.5);
\draw[red](0,0)--(5,2);
\draw(5,1)--(0,3.5);
\draw(0,2.5)--(2,2.5);
\node[below]at(2,0){$\gamma_{1}$};
\node[left]at(0,2.5){$\gamma_{2,0}$};
\node[left]at(0,3){$\gamma_{2}$};
\node[below]at(0.3,4){$\gamma_{3}$};
\node[below]at(6.2,0.15){$\gamma^{\mathrm{inc}}_{1}$};
\node[left]at(0,6){$\gamma^{\mathrm{inc}}_2$};
\node[above]at(2.4,2.5){$Q_0=Q_1$};
\end{tikzpicture}
	\caption{Increasing $\gamma_2$}
	\label{fig:CaseB2_increasing}
\end{subfigure}\hspace{3cm} 
\begin{subfigure}[H]{0.3\textwidth}
	\centering
	\begin{tikzpicture}
\draw[<->](6,0)--(0,0)--(0,6);
\draw(0,1.6)--(2,1.6);
\draw(2,0)--(2,2.5);
\draw[red](0,0)--(5,4);
\draw(5,1)--(0,3.5);
\draw(0,2.5)--(2,2.5);
\draw(0,1)--(2,1);
\node[below]at(2,0){$\gamma_{1}$};
\node[left]at(0,2.5){$\gamma_{2,0}$};
\node[left]at(0,1){$\gamma_{2}$};
\node[left]at(0,1.6){$\gamma_{2,1}$};
\node[below]at(0.3,4){$\gamma_{3}$};
\node[below]at(6.2,0.15){$\gamma^{\mathrm{inc}}_{1}$};
\node[left]at(0,6){$\gamma^{\mathrm{inc}}_2$};
\node[above]at(2,2.5){$Q_0$};
\node[right]at(2,1){$Q_1$};
\end{tikzpicture}
	\caption{Decreasing $\gamma_2$}
	\label{fig:CaseB2_decreasing}
\end{subfigure}
\caption{Case B2}
\label{fig:CaseB2}
\end{figure}
 {\bf Case B3:} The incoming wave is $(\rho_{3,0},\rho_3)$ (on road 3). \\[10pt]
$-$ If $f(\rho_3) > f(\rho_{3,0})$ (see Figure \ref{fig:CaseB3_increasing}), 
we define 
\[ \gamma_{3,1} = \min \left\lbrace
\begin{array}{ll}
a_{31}\gamma_{1}+ a_{32}\gamma^{\max}_2, \\
a_{31}\gamma_{1}+ \dfrac{a_{32}}{a_{42}}(\gamma_{4}^{\max}-a_{41}\gamma_{1}).
\end{array}
\right.\]
We get: 
\begin{align*}
\TV(f)^- &= \abs{\gamma_3 - \gamma_{3,0}},\\
\TV(f)^+ &= \left(\dfrac{1}{a_{32}}+ \dfrac{a_{42}}{a_{32}}\right) \abs{\gamma_{3,1} - \gamma_{3,0}}+\abs{\gamma_3 - \gamma_{3,1}},\\
\Delta \TV(f) &= \left(\dfrac{1}{a_{32}}+\dfrac{a_{42}}{a_{32}}-1\right)\abs{\gamma_{3,1} - \gamma_{3,0}},\\
\Delta\Gamma &= \dfrac{1}{a_{32}}(\gamma_{3,1}- \gamma_{3,0})>0,\\
\Delta \bar{h} &= 0.
\end{align*}
Hence, (P2) holds and (P3) does not need to be checked. \\[10pt]
$-$ If $f(\rho_3) < f(\rho_{3,0})$  (see Figure \ref{fig:CaseB3_decreasing}),
we define $\gamma_{3,1} = \left(a_{31}+a_{32}\dfrac{p_2}{p_1}\right) \gamma_{1}$ and we compute:
\begin{align*}
\TV(f)^- =&~ \abs{\gamma_3 - \gamma_{3,0}},\\
\TV(f)^+ =&~ \dfrac{p_1}{a_{31}p_1+a_{32}p_2}\abs{\gamma_3 - \gamma_{3,1}} + \dfrac{1}{a_{32}}\abs{\gamma_{3,1}-\gamma_{3,0}}+ \dfrac{p_2}{a_{31}p_1+a_{32}p_2}\abs{\gamma_3 - \gamma_{3,1}} \\
&+ \dfrac{a_{42}}{a_{32}}\abs{\gamma_{3,1} - \gamma_{3,0}} + \dfrac{a_{41}p_1+a_{42}p_2}{a_{31}p_1+a_{32}p_2}\abs{\gamma_3 - \gamma_{3,1}} ,\\
\Delta \TV(f) =&~\left( \dfrac{(1+a_{41})p_1+(1+a_{42})p_2}{a_{31}p_1+a_{32}p_2} -1\right) \abs{\gamma_3 - \gamma_{3,1}} 
+ \left(\dfrac{1+a_{42}}{a_{32}}-1\right) \abs{\gamma_{3,1}-\gamma_{3,0}},\\
\Delta \Gamma =&~  \dfrac{p_1+p_2}{a_{31}p_1+a_{32}p_2}(\gamma_3 - \gamma_{3,1}) + \dfrac{1}{a_{32}}(\gamma_{3,1} - \gamma_{3,0})<0,\\
\Delta \bar{h} =&~ \dfrac{1}{a_{31}p_1+a_{32}p_2}(\gamma_3 - \gamma_{3,1})<0.
\end{align*}
Hence, (P2) and (P3) hold. \\[10pt]
\begin{figure}[ht]
\begin{subfigure}[H]{0.3\textwidth}
	\centering
	\begin{tikzpicture}
\draw[<->](6,0)--(0,0)--(0,6);
\draw(0,2)--(2,2);
\draw(2,0)--(2,2.5);
\draw[red](0,0)--(5,2);
\draw(5,1)--(0,3.5);
\draw(5,2.5)--(0,5);
\draw(5,0.5)--(0,3);
\draw(0,2.5)--(2,2.5);
\draw(0,4.5)--(2,2.5)--(4.5,0);
\node[below]at(2,0){$\gamma_{1}$};
\node[left]at(0,2){$\gamma_{2,0}$};
\node[left]at(0,2.5){$\gamma_{2}^{\max}$};
\node[above]at(0.3,5){$\gamma_{3}$};
\node[above]at(0.3,3.3){$\gamma_{3,1}$};
\node[above]at(5.3,0.3){$\gamma_{3,0}$};
\node[above]at(0.3,4.2){$\gamma_{4}$};
\node[below]at(6.2,0.15){$\gamma^{\mathrm{inc}}_{1}$};
\node[left]at(0,6){$\gamma^{\mathrm{inc}}_2$};
\node[above]at(1.7,1.47){$Q_0$};
\node[above]at(2,2.5){$Q_1$};
\end{tikzpicture}
	\caption{Increasing $\gamma_3$}
	\label{fig:CaseB3_increasing}
\end{subfigure}\hspace{3cm} 
\begin{subfigure}[H]{0.3\textwidth}
	\centering
	\begin{tikzpicture}
\draw[<->](6,0)--(0,0)--(0,6);
\draw(0,3)--(1,3);
\draw(2,0)--(2,2.5);
\draw[red](0,0)--(5,2);
\draw(5,1)--(0,3.5);
\draw(3.5,0)--(2,0.8)--(0,1.8);
\draw(2,0)--(0,1);
\node[below]at(2,0){$\gamma_{1,0}$};
\node[above]at(0.5,1.5){$\gamma_{3,1}$};
\node[above]at(0.5,0.75){$\gamma_{3}$};
\node[left]at(0,3){$\gamma_{2}$};
\node[below]at(0.5,3.8){$\gamma_{3,0}$};
\node[below]at(6.2,0.15){$\gamma^{\mathrm{inc}}_{1}$};
\node[left]at(0,6){$\gamma^{\mathrm{inc}}_2$};
\node[above]at(2,2.5){$Q_0$};
\node[above]at(1.1,0.5){$Q_1$};
\end{tikzpicture}
	\caption{Decreasing $\gamma_3$}
	\label{fig:CaseB3_decreasing}
\end{subfigure}
\caption{Case B3}
\label{fig:CaseB3}
\end{figure}
 {\bf Case B4:} The incoming wave is $(\rho_{4,0},\rho_4)$ (on road 4). \\[10pt]
$-$ If $f(\rho_4) > f(\rho_{4,0})$ nothing changes.  \\[10pt]
$-$ If $f(\rho_4) < f(\rho_{4,0})$:  the same as case {\bf B3} decreasing exchanging the roles of $\gamma_3$ and $\gamma_4$.
\item {\bf Case C: Supply constrained, priority line intersects a supply constraint.} 
It is not restrictive to assume that the priority line $hP$, $h>0$,
intersects the constraint $a_{31}\gamma_1^\mathrm{inc}+a_{32}\gamma_2^\mathrm{inc}=\gamma_{3,0}$. 
We distinguish the following subcases,
depending on the origin of the incoming wave: \\[10pt]
 {\bf Case C1:} The incoming wave is $(\rho_1,\rho_{1,0})$ (on road 1). \\[10pt]
$-$ If $f(\rho_1) > f(\rho_{1,0})$, nothing changes. \\[10pt]
$-$ If $f(\rho_1) < f(\rho_{1,0})$, the analysis is similar to case {\bf B1} decreasing (see Figure \ref{fig:CaseC1_decreasing}). \\[10pt]
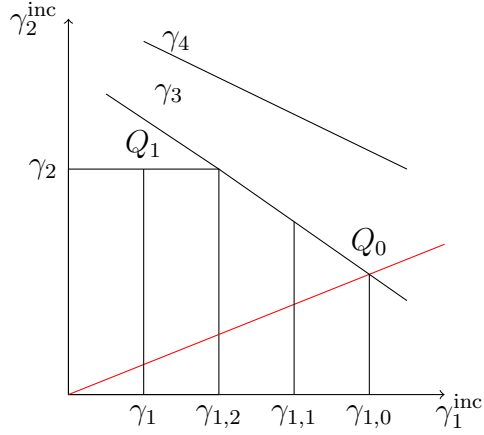
\begin{figure}[ht]
\centering
\begin{tikzpicture}
\draw[<->](5,0)--(0,0)--(0,5);
\draw(0,0)rectangle(2,3);
\draw[red](0,0)--(5,2);
\draw(3,0)--(3,2.3);
\draw(1,0)--(1,3);
\draw(0.5,4)--(2,3)--(4.5,1.25);
\draw(1,4.7)--(4.5,3);
\draw(4,0)--(4,1.6);
\node[left]at(0,3){$\gamma_2$};
\node[right]at(1,4){$\gamma_3$};
\node[below]at(1,0){$\gamma_{1}$};
\node[below]at(2,0){$\gamma_{1,2}$};
\node[below]at(3,0){$\gamma_{1,1}$};
\node[below]at(4,0){$\gamma_{1,0}$};
\node[below]at(5.2,0.15){$\gamma^{\mathrm{inc}}_{1}$};
\node[left]at(0,5){$\gamma^{\mathrm{inc}}_2$};
\node[right]at(1.1,4.7){$\gamma_4$};
\node[above]at(4,1.7){$Q_0$};
\node[above]at(1,3){$Q_1$};
\end{tikzpicture}
\caption{Case C1 -  decreasing $\gamma_1$}
\label{fig:CaseC1_decreasing}
\end{figure}
{\bf Case C2:} The incoming wave is $(\rho_2,\rho_{2,0})$ (on road 2). This case is symmetric to {\bf C1}. \\[10pt]
{\bf Case C3:} The incoming wave is $(\rho_{3,0},\rho_{3})$ (on road 3). \\[10pt]
$-$ If $f(\rho_3) > f(\rho_{3,0})$ (see Figure \ref{fig:CaseC3_increasing}),
we define 
\[
\gamma_{3,1} = \min \left\lbrace
\begin{array}{ll}
\left(a_{31}+ a_{32}\dfrac{p_2}{p_1}\right)\gamma^{\max}_1, \\[10pt]
\left(a_{31}\dfrac{p_1}{p_2}+ a_{32}\right)\gamma_{2}^{\max},\\[10pt]
\dfrac{a_{31}p_1+a_{32}p_2}{a_{41}p_1+a_{42}p_2} \ \gamma_4^{\max},
\end{array}
\right.
\]
and we compute: 
\begin{align*}
\TV(f)^- &= \abs{\gamma_3 - \gamma_{3,0}},\\
\TV(f)^+ &= \dfrac{p_1+p_2}{a_{31}p_1+a_{32}p_2}\abs{\gamma_{3,1}-\gamma_{3,0}}+ \abs{\gamma_3-\gamma_{3,1}}+ \dfrac{a_{41}p_1+a_{42}p_2}{a_{31}p_1+a_{32}p_2}\abs{\gamma_{3,1}-\gamma_{3,0}},\\
\Delta \TV(f) &= \left(\dfrac{(a_{41}+1)p_1+(a_{42}+1)p_2}{a_{31}p_1+a_{32}p_2}-1\right)\abs{\gamma_{3,1}-\gamma_{3,0}},\\
\Delta\Gamma &=  \dfrac{p_1+p_2}{a_{31}p_1+a_{32}p_2}(\gamma_{3,1} - \gamma_{3,0})>0,\\
\Delta \bar{h} &= \dfrac{1}{a_{31}p_1+a_{32}p_2}(\gamma_{3,1}-\gamma_{3,0})>0.
\end{align*}
Hence, (P2) holds and (P3) does not count since we are increasing fluxes. \\[10pt]
$-$ If $f(\rho_3) < f(\rho_{3,0})$  (see Figure \ref{fig:CaseC3_decreasing}) we get:
\begin{align*}
\TV(f)^- &= \abs{\gamma_3 - \gamma_{3,0}}, \\
\TV(f)^+ &= \dfrac{p_1 + p_2}{a_{31}p_1+a_{32}p_2}\abs{\gamma_3 -\gamma_{3,0}} + \dfrac{a_{41}p_1 + a_{42}p_2}{a_{31}p_1+a_{32}p_2}\abs{\gamma_{3} - \gamma_{3,0}}, \\
\Delta \TV(f) &= \left(\dfrac{(a_{41}+1)p_1+(a_{42}+1)p_2}{a_{31}p_1+a_{32}p_2}-1\right)\abs{\gamma_{3}-\gamma_{3,0}},\\
\Delta\Gamma &= \dfrac{p_1 + p_2}{a_{31}p_1+a_{32}p_2}(\gamma_3 -\gamma_{3,0})<0,\\
\Delta \bar{h} &= \dfrac{1}{a_{31}p_1+a_{32}p_2}(\gamma_3 - \gamma_{3,0})<0.
\end{align*}
Hence, (P2) and (P3) hold. \\[10pt]
\begin{figure}[ht]
\begin{subfigure}[H]{0.3\textwidth}
	\centering
	\begin{tikzpicture}
\draw[<->](5,0)--(0,0)--(0,5);
\draw(0,0)rectangle(2,3);
\draw[red](0,0)--(2,3)--(3.5,5);
\draw(1.2,5)--(4.5,2.75);
\draw(1,5)--(2,3)--(3,1);
\draw(0.5,4)--(2,3)--(4.5,1.25);
\draw(0,2)--(2.8,0);
\node[left]at(0,3){$\gamma_2$};
\node[right]at(0.8,3.2){$\gamma_{3,1}$};
\node[below]at(2,0){$\gamma_{1}$};
\node[below]at(1.5,1.5){$\gamma_{3,0}$};
\node[below]at(4,3){$\gamma_{3}$};
\node[below]at(3.2,1.5){$\gamma_{4}$};
\node[below]at(5.2,0.15){$\gamma^{\mathrm{inc}}_{1}$};
\node[left]at(0,5){$\gamma^{\mathrm{inc}}_2$};
\node[above]at(0.8,1.5){$Q_0$};
\node[above]at(2.7,4.1){$Q_1$};
\end{tikzpicture}
	\caption{Increasing $\gamma_3$}
	\label{fig:CaseC3_increasing}
\end{subfigure}\hspace{3cm} 
\begin{subfigure}[H]{0.3\textwidth}
	\centering
	\resizebox{1.6\textwidth}{!}{
	\begin{tikzpicture}[baseline=0.2cm]
\draw[<->](6,0)--(0,0)--(0,6);
\draw[red](0,0)--(5,2);
\draw(5,1)--(0,3.5);
\draw(2,0)--(0,1);
\node[above]at(0.5,0.75){$\gamma_{3}$};
\node[below]at(0.5,3.8){$\gamma_{3,0}$};
\node[below]at(6.2,0.15){$\gamma^{\mathrm{inc}}_{1}$};
\node[left]at(0,6){$\gamma^{\mathrm{inc}}_2$};
\node[above]at(4,1.5){$Q_0$};
\node[above]at(1.2,0.5){$Q_1$};
\end{tikzpicture}}
	\caption{Decreasing $\gamma_3$}
	\label{fig:CaseC3_decreasing}
\end{subfigure}
\caption{Case C3}
\label{fig:CaseC3}
\end{figure}
{\bf Case C4:} The incoming wave is $(\rho_{4,0},\rho_{4})$ (on road 4). \\[10pt]
$-$ If $f(\rho_4) > f(\rho_{4,0})$,  nothing happens. \\[10pt]
$-$ If $f(\rho_4) < f(\rho_{4,0})$, the situation is similar to case {\bf C3} with the roles of $\gamma_3$ and $\gamma_4$ reversed.
\end{itemize}

\bibliographystyle{plain}
\nocite{*}
\bibliography{RSbiblio}

\end{document}